\newtheorem{theorem}{Theorem}
\newtheorem{corollary}{Corollary}
\newtheorem{proposition}{Proposition}
\newtheorem{lemma}{Lemma}
\newtheorem{remark}{Remark}
\title{The Nonlocal-to-Local Limit for the Inviscid Leray-$\alpha$ Equations}
\author{Jule Schindler}
\address{Department of Mathematics, Friedrich-Alexander-Universit\"at Erlangen-N\"urnberg, Cauerstr.~11, 91058 Erlangen, Germany}
\email{jule.schindler@fau.de}
\author{Emil Wiedemann}
\address{Department of Mathematics, Friedrich-Alexander-Universit\"at Erlangen-N\"urnberg, Cauerstr.~11, 91058 Erlangen, Germany}
\email{emil.wiedemann@fau.de}
\date{}
\begin{document}

\begin{abstract} We consider the inviscid Leray-$\alpha$ equations -- an inviscid nonlocal regularisation of the Euler equations. In the first part, we prove the convergence of strong solutions of the Leray-$\alpha$ equations to strong solutions of the Euler equations in $H^s(\mathbb{R}^d)$ for $s>d/2 +1 $, $d\in \{2,3\}$, for a large class of regularising kernels. In the second part, we consider weak solutions on a bounded domain with a local scaling property far away from the boundary. The scaling relates to second-order structure functions from turbulence theory and does not imply regularity. Nonetheless, under these assumptions, the weak solutions converge to (possibly wild) weak solutions of Euler in $L^2$ for almost every $t$.
\end{abstract}

\maketitle

\section{Introduction}

   We consider the \emph{incompressible inviscid  Leray-$\alpha$ equations} -- an inviscid  nonlocal regularisation of the incompressible Euler equations -- on $\Omega\times [0,T]$, $d=2,3$,
    \begin{align}
    \label{eq: model}
        \begin{cases}
        \partial_t v^{\alpha} + (u^{\alpha} \cdot \nabla) v^{\alpha} +\nabla p^{\alpha}=0 &\text{ on } \Omega\times [0,T], \\
        v^{\alpha}=u^{\alpha}-\alpha^2\Delta u^{\alpha}  &\text{ on } \Omega\times [0,T],\\
        \operatorname{div} v^{\alpha}= \operatorname{div} u^{\alpha}=0  &\text{ on } \Omega\times [0,T], \\
        v^{\alpha}(0,\cdot)  = v^{\alpha}_{0}  & \text{ on }  \Omega,
        \end{cases}
    \end{align} 
    with $\Omega\subseteq \mathbb{R}^d$ and where $v^{\alpha}:\Omega\times[0,T]\to \mathbb{R}^d$ is the velocity field, $p^{\alpha}:\Omega\times[0,T]\to\mathbb{R}$ is the pressure, and $\alpha>0$ is a given length scale.

The Leray-$\alpha$ model \eqref{eq: model} was introduced in \cite{Cheskidov.2005} and is named after Jean Leray, who smoothed the transport velocity in the nonlinear term in the Navier-Stokes equations to show the existence of weak solutions in \cite{Leray.1934} in 1934. Although similar so-called $\alpha$-turbulence models have been extensively studied in the literature, considering them either as regularisations of the Euler or Navier-Stokes equations or as large eddy simulation models of turbulence, comparatively little attention has been devoted to the inviscid Leray-$\alpha$ model \eqref{eq: model}. We here wish to answer fundamental questions about this inviscid regularisation of the Euler equations, which we feel to be particularly intuitive. Apart from the nonlocal-to-local limit ($\alpha\to0$) towards Euler, we are motivated by comparing the $\alpha$-limit with the vanishing viscosity limit of Navier-Stokes solutions.

If $\Omega$ is a bounded domain, we impose the boundary condition
\begin{equation}
    u^{\alpha}= 0 \quad \text{ on } \partial \Omega \times [0,T],
\end{equation}
which is chosen in analogy with the effect of viscosity. For a discussion of the boundary conditions in the case of the related Euler-$\alpha$ equations, see \cite{LopesFilho.2015}.

On the whole space, $\Omega=\mathbb{R}^d$, we can write more generally
\begin{equation}
\label{eq: general Kalpha}
    u^{\alpha}=K^{\alpha}*v^{\alpha},
\end{equation}
where $K^{\alpha}$ is associated with the Green function of the Helmholtz operator. To generalise certain results to a larger class of regularising kernels, we will remark consistently which properties of $K^{\alpha}$ have been used in each instance.

In Fourier variables, the relationship between $u^{\alpha}$ and $v^{\alpha}$ is written as
   \begin{equation}
   \label{eq: alpha kernel}
       \begin{aligned}
           \hat{u}^{\alpha}(\xi)=\frac{1}{1+\alpha^2|\xi|^2}\ \hat{v}^{\alpha}(\xi) \ \ \  \ \ \ \forall \xi \in \mathbb{R}^d.
       \end{aligned}
   \end{equation}

For $\alpha=0$, we formally obtain the \emph{incompressible Euler equations}
 \begin{align}\
 \label{eq: Euler}
        \begin{cases}
        \partial_t  v^{} + (v^{} \cdot \nabla) v^{} +\nabla p^{}=0  &\text{ on } \Omega\times [0,T], \\
        \operatorname{div} v^{}=0 &\text{ on } \Omega\times [0,T], \\
        v^{}(0,\cdot)  = v^{}_{0}   &\text{ on } \Omega.
        \end{cases}
    \end{align} 
    
For bounded domains $\Omega$, it is required additionally that
\begin{equation}
    v\cdot \hat{n} =0|_{\partial \Omega} \quad\text{ on } [0,T],
\end{equation}
where $\hat{n}$ is the unit exterior normal vector to $\partial \Omega$.\\

\textbf{Main Results.}
In the first part of this work, we show the following theorem.
    \begin{theorem}
    \label{thm:main2}
Let 
\begin{itemize}
\item $v_{0},v^{\alpha}_{0} \in H^s_{}(\mathbb{R}^d)$, $s>d/2+1$, divergence-free, 
\item $\|v_0 -v^{\alpha}_{0}\|_{H^s} \to 0$ as $\alpha \searrow 0$,
\item $T^*>0$ the time of existence of the solution of the Euler equations, i.e.\ such that for any $T<T^*$ there exists a unique solution $v^{} \in \mathcal{C}([0,T];H^s_{}(\mathbb{R}^d)) \cap W^{1,1}(0,T;H^{s-1}_{}(\mathbb{R}^d))$ with initial datum $v_0$. 
\end{itemize}
Then, for all $0<T<T^*$, there exists $0<\bar{\alpha} = \bar{\alpha}(v_0,v_{0}^{\alpha}, T)$ such that for all $\alpha \leq \bar{\alpha}$ there is a unique solution $v^{\alpha} \in \mathcal{C}([0,T];H^s_{}) \cap W^{1,1}(0,T;H^{s-1}_{})$ of the Leray-$\alpha$ equations with initial datum $v^{\alpha}_{0}$. 

Moreover,
   \begin{equation}
       \begin{aligned}
           \|v^{\alpha}-v\|_{L^{\infty}(0,T;H^s)} \to 0 \ \ \ \text{as} \  \alpha \searrow0,
       \end{aligned}
   \end{equation}
 and for all $0\leq t \leq T$, $s' \in [0,s-1]$,
    \begin{equation}
    \label{eq: thm main1 interpol}
        \begin{aligned}
            \|(v^{\alpha}-v)(t)\|_{H^{s'}} \leq C\left( \|v^{\alpha}_0-v_0\|_{H^{s'}} + \alpha^{\iota} t\right),
       \end{aligned}
    \end{equation}
where $C=C(\|v_0\|_{H^s},T)$ is independent of $\alpha$ and $\iota=s-s'$ for $s-2\leq s' \leq s-1$ and $\iota=2$ for $0\leq s'\leq s-2$.
    \end{theorem}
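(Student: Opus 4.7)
The plan is a three-stage energy/commutator argument: uniform a priori $H^s$ bounds yielding existence on the full interval $[0,T]$; difference estimates in norms weaker than $H^s$ with explicit rates (producing \eqref{eq: thm main1 interpol}); and a Bona-Smith approximation to upgrade to convergence in the top norm.

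\emph{Stage 1: uniform bounds and existence on $[0,T]$.} Local-in-time existence and uniqueness of $v^\alpha\in\mathcal{C}([0,T^\alpha];H^s)$ follows from a standard Galerkin or Picard scheme: the transport velocity $u^\alpha=K^\alpha\ast v^\alpha$ is at least as regular as $v^\alpha$ and divergence-free, so the nonlinearity is in fact tamer than Euler's. For the a priori bound I would apply $\Lambda^s$ to \eqref{eq: model}, pair with $\Lambda^s v^\alpha$, and use $\operatorname{div}u^\alpha=0$ so that only the Kato-Ponce commutator $[\Lambda^s,u^\alpha\cdot\nabla]v^\alpha$ survives. The crucial observation is that the Fourier multiplier $(1+\alpha^2|\xi|^2)^{-1}$ is pointwise bounded by $1$, hence $\|u^\alpha\|_{H^s}\leq\|v^\alpha\|_{H^s}$ and, by Sobolev embedding with $s>d/2+1$, $\|\nabla u^\alpha\|_{L^\infty}\lesssim\|v^\alpha\|_{H^s}$. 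This yields the Euler-type differential inequality $\tfrac{d}{dt}\|v^\alpha\|_{H^s}^2\leq C\|v^\alpha\|_{H^s}^3$. A continuity/bootstrap comparison with the given Euler solution on $[0,T]$ then extends $T^\alpha\geq T$ for all $\alpha\leq\bar\alpha(v_0,v_0^\alpha,T)$ and produces $\sup_{[0,T]}\|v^\alpha\|_{H^s}\leq M$ uniformly in $\alpha$; uniqueness follows from an $L^2$ energy estimate on the difference of two hypothetical solutions.

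\emph{Stage 2: quantitative rate in $H^{s'}$.} Setting $w=v^\alpha-v$ and writing $u^\alpha-v=w+(u^\alpha-v^\alpha)$, subtraction of \eqref{eq: Euler} from \eqref{eq: model} gives
\begin{equation*}
\partial_t w+(u^\alpha\cdot\nabla)w+(w\cdot\nabla)v+((u^\alpha-v^\alpha)\cdot\nabla)v+\nabla(p^\alpha-p)=0.
\end{equation*}
The $\alpha$-smallness enters through the source $(u^\alpha-v^\alpha)\cdot\nabla v$. The elementary Fourier bound $\tfrac{\alpha^2|\xi|^2}{1+\alpha^2|\xi|^2}\leq(\alpha|\xi|)^{\iota}$, valid for every $\iota\in[0,2]$, produces the key smoothing estimate $\|u^\alpha-v^\alpha\|_{H^{s'}}\leq C\alpha^\iota\|v^\alpha\|_{H^{s'+\iota}}$; choosing $\iota=2$ for $s'\leq s-2$ and $\iota=s-s'$ for $s-2\leq s'\leq s-1$ respects the $H^s$ regularity budget from Stage 1 and matches exactly the exponents stated in the theorem. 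A standard $H^{s'}$ energy estimate on $w$, using Kato-Ponce for the transport commutator, Moser-type product estimates for $(w\cdot\nabla)v$, and divergence-freeness of $w$ to annihilate the pressure, yields $\tfrac{d}{dt}\|w\|_{H^{s'}}\leq C(M)\|w\|_{H^{s'}}+C(M)\alpha^\iota$; Gronwall then furnishes \eqref{eq: thm main1 interpol}.

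\emph{Stage 3: $L^\infty(0,T;H^s)$ convergence.} This is the main obstacle, because a direct $H^s$ estimate on $w$ does not close: Moser applied to $(u^\alpha-v^\alpha)\cdot\nabla v$ at regularity $s$ requires $\|\nabla v\|_{H^s}$, one derivative beyond the hypothesis. I would use a Bona-Smith argument. Mollify both initial data at scale $\epsilon$ to $v_0^\epsilon,v_0^{\alpha,\epsilon}\in H^{s+2}$ (preserving divergence-freeness and convergence in $H^s$), and let $v^\epsilon,v^{\alpha,\epsilon}$ denote the corresponding Euler resp.\ Leray-$\alpha$ solutions: they remain defined on $[0,T]$ by Stage 1, and propagate $H^{s+2}$-regularity since the blow-up criteria for $v^\epsilon$ and $v^{\alpha,\epsilon}$ depend only on the respective $L^1_t L^\infty_x$-norms of $\nabla v^{\epsilon},\nabla v^{\alpha,\epsilon}$, both controlled by the uniform $H^s$ bound. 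Now decompose
\begin{equation*}
\|v^\alpha-v\|_{H^s}\leq\|v^\alpha-v^{\alpha,\epsilon}\|_{H^s}+\|v^{\alpha,\epsilon}-v^\epsilon\|_{H^s}+\|v^\epsilon-v\|_{H^s}.
\end{equation*}
The outer terms tend to $0$ as $\epsilon\to 0$ uniformly in $\alpha$, by the continuous-dependence result for each equation in $H^s$ (typically proved by combining the $H^{s-1}$ rate from Stage 2 with convergence of the $H^s$ norm via the differential identity underlying Stage 1). For fixed $\epsilon$, the middle term is handled by rerunning Stage 2 at regularity $s$: the extra smoothness of the data now bounds $(u^{\alpha,\epsilon}-v^{\alpha,\epsilon})\cdot\nabla v^{\alpha,\epsilon}$ in $H^s$ by $C(\epsilon)\alpha^2$, so Gronwall delivers $\|v^{\alpha,\epsilon}-v^\epsilon\|_{L^\infty(0,T;H^s)}\leq C(\epsilon)\alpha^2\to 0$. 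Choosing $\epsilon$ first and then $\alpha$ concludes the argument.
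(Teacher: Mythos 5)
Your proposal is correct and its skeleton coincides with the paper's: both regularise the initial data (the paper by a sharp Fourier cut-off at frequency $1/\delta$, you by mollification into $H^{s+2}$), split $v^\alpha-v$ into the same three pieces, extract the $H^{s'}$ rate from the source term $((u^\alpha-v^\alpha)\cdot\nabla)v$ via the multiplier bound $\alpha^2|\xi|^2/(1+\alpha^2|\xi|^2)\le(\alpha|\xi|)^{\iota}$, and send $\alpha\to0$ at fixed regularisation parameter to kill the middle term. The one genuine divergence is how the outer terms are closed in the top norm. You invoke the classical Bona--Smith continuous-dependence mechanism ($H^{s-1}$ convergence combined with convergence of the $H^s$ norms and weak convergence); be aware that the naive reading of your parenthetical --- feeding the $H^{s-1}$ rate $O(\epsilon)$ into the $H^s$ Gronwall against $\|v^{\epsilon}\|_{H^{s+1}}=O(\epsilon^{-1})$ --- only produces an $O(1)$ contribution and does not close, so you genuinely need the norm-convergence half of Bona--Smith there. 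The paper instead runs a quantitative two-norm Gronwall with an intermediate exponent $\sigma\in(d/2,s-1)$ \emph{strictly} below $s-1$: the difference is first shown to be $O(\delta^{s-\sigma})$ in $H^{\sigma}$, which against the $\delta^{-1}$ loss gives the vanishing factor $\delta^{s-\sigma-1}$ in the $H^s$ estimate. That route yields an explicit rate for the outer terms and avoids any weak-compactness step; yours is more standard but less quantitative. Finally, for the rate \eqref{eq: thm main1 interpol} in the low range $0\le s'\le d/2$, the ``standard Kato--Ponce'' energy estimate you invoke is not directly available ($H^{s'}$ is not an algebra and the cross term in the usual commutator bound is problematic); the paper supplies a dedicated Littlewood--Paley argument (Lemma \ref{lemma: Besov l}) for exactly this range, and your write-up would need an equivalent ingredient.
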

    
As a byproduct of the proof, we make the following observation.
      \begin{corollary}
      \label{corollary: main2}
Take the same hypotheses as in Theorem \ref{thm:main2}. Let 
\begin{itemize}
\item $K^{\alpha}$ such that for all $l\geq 0$, there exists $C>0$, independent of $\alpha$, such that $\|K^{\alpha}*\phi\|_{H^l} \leq C \|\phi\|_{H^l}$ for $\phi \in H^{l}$,
\item $\|K^{\alpha}*\phi-\phi\|_{H^{s}} \to 0$ as $\alpha \to 0$ for $\phi \in H^{s}$. 
\end{itemize}

Then, for all $0<T<T^*$, there exists $0<\bar{\alpha} = \bar{\alpha}(v_0,v_{0}^{\alpha}, T)$ such that for all $\alpha \leq \bar{\alpha}$ there is a unique solution $v^{\alpha} \in \mathcal{C}([0,T];H^s_{}) \cap W^{1,1}(0,T;H^{s-1}_{})$ of the Leray-$\alpha$ equations with kernel $K^{\alpha}$ (i.e.\ $u^{\alpha}=K^{\alpha}*v^{\alpha}$) and with initial datum $v^{\alpha}_{0}$. 

Moreover,
   \begin{equation}
       \begin{aligned}
           \|v^{\alpha}-v\|_{L^{\infty}(0,T;H^s)} \to 0 \ \ \ \text{as} \  \alpha \searrow0,
       \end{aligned}
   \end{equation}
   and for all $0\leq t \leq T$, $s' \in [0,s-1]$,
    \begin{equation}
        \begin{aligned}
            \|(v^{\alpha}-v)(t)\|_{H^{s'}} \leq C\left( \|v^{\alpha}_0-v_0\|_{H^{s'}} + \|(K^{\alpha}*v^{\alpha}-v^{\alpha})(t)\|_{H^{s'}} t\right),
       \end{aligned}
    \end{equation}
where $C=C(\|v_0\|_{H^s},T)$ is independent of $\alpha$.
  \end{corollary}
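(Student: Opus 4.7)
The plan is to revisit the proof of Theorem~\ref{thm:main2} and to observe that the specific Helmholtz structure of the kernel enters only through two distinct mechanisms, each of which is precisely what one of the two hypotheses of the corollary encodes.

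First, the construction of $v^{\alpha}$ and all of the uniform a priori estimates rely on the fact that $u^{\alpha} = K^{\alpha} * v^{\alpha}$ lives in the same Sobolev spaces as $v^{\alpha}$, with constants independent of $\alpha$. Every such inequality in the proof of Theorem~\ref{thm:main2} reduces to the bound $\|K^{\alpha} * \phi\|_{H^{l}} \leq C \|\phi\|_{H^{l}}$, which is the first hypothesis. Hence the local-in-time existence/uniqueness argument, together with the Moser-type energy estimates that propagate the uniform $H^{s}$ bound on any $[0,T]$ with $T < T^{\ast}$, transcribes verbatim once $\alpha$ is taken small enough. In particular, $\|v^{\alpha}\|_{L^{\infty}(0,T;H^{s})}$ is bounded by $C(\|v_{0}\|_{H^{s}},T)$ independently of $\alpha$.

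Second, the quantitative rate $\alpha^{\iota}$ in \eqref{eq: thm main1 interpol} arises from an $H^{s'}$ energy estimate for $w := v^{\alpha} - v$. Subtracting \eqref{eq: Euler} from \eqref{eq: model}, applying fractional derivatives, and using standard commutator estimates, the only term that is not immediately absorbed by a Gronwall argument is the one coming from the difference between the convolved transport velocity and $v^{\alpha}$ itself, namely $((u^{\alpha}-v^{\alpha})\cdot\nabla)v^{\alpha}$. For the Helmholtz kernel this term is controlled in $H^{s'}$ by $\alpha^{\iota}\|v^{\alpha}\|_{H^{s}}$ via an explicit Fourier-multiplier calculation; in the general setting I would simply leave it as $\|(K^{\alpha}*v^{\alpha} - v^{\alpha})(t)\|_{H^{s'}}$ multiplied by the uniform $H^{s}$ bound on $v^{\alpha}$, and Gronwall then yields the estimate stated in the corollary.

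The only step that is not purely mechanical is the qualitative convergence $\|v^{\alpha}-v\|_{L^{\infty}(0,T;H^{s})} \to 0$, which uses the second hypothesis $\|K^{\alpha}*\phi - \phi\|_{H^{s}} \to 0$ for \emph{fixed} $\phi \in H^{s}$. The main obstacle is that this hypothesis must be applied along the moving sequence $v^{\alpha}$. I would resolve this by the splitting
\begin{equation*}
    K^{\alpha}*v^{\alpha} - v^{\alpha} = \bigl(K^{\alpha}*(v^{\alpha}-v) - (v^{\alpha}-v)\bigr) + \bigl(K^{\alpha}*v - v\bigr),
\end{equation*}
bounding the first bracket by $(1+C)\|v^{\alpha}-v\|_{H^{s}}$ using the uniform $H^{s}$-boundedness of $K^{\alpha}*$, and controlling the second bracket by the second hypothesis applied to the fixed function $v \in H^{s}$. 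Combined with the quantitative estimate in a weaker $H^{s'}$-norm, this suffices to close the bootstrap and upgrade convergence to the full $H^{s}$-norm exactly as in Theorem~\ref{thm:main2}.
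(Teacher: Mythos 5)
Your first and second steps do match the paper: the uniform existence and a priori bounds use only the hypothesis $\|K^{\alpha}*\phi\|_{H^l}\leq C\|\phi\|_{H^l}$, and the $H^{s'}$ estimate for $s'\in[0,s-1]$ is obtained exactly as you describe, by an energy estimate for $w=v^{\alpha}-v$ in which the contribution of $((K^{\alpha}*v^{\alpha}-v^{\alpha})\cdot\nabla)v^{\alpha}$ is simply left in the form $\|(K^{\alpha}*v^{\alpha}-v^{\alpha})(t)\|_{H^{s'}}$ (the paper uses Lemma~\ref{lemma: Besov l} for $s'\leq d/2$ and estimates \eqref{eq: est 0}, \eqref{eq: est 4} otherwise).

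The gap is in your treatment of the top-order convergence $\|v^{\alpha}-v\|_{L^{\infty}(0,T;H^s)}\to0$. A direct $H^s$ energy estimate for $w=v^{\alpha}-v$ does not close: the term $\langle (K^{\alpha}*w\cdot\nabla)v^{\alpha},w\rangle_{H^s}$ puts a full derivative on $v^{\alpha}$ and hence requires control of $\|v^{\alpha}\|_{H^{s+1}}$, which is unavailable for data merely in $H^s$; this is exactly why the quantitative estimate is restricted to $s'\leq s-1$. Moreover, no bootstrap can ``upgrade'' convergence in a weaker norm plus a uniform $H^s$ bound to strong convergence in $H^s$ itself --- interpolation only reaches $H^{s''}$ for $s''<s$. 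The mechanism you would need to reproduce is the paper's three-term decomposition $v^{\alpha}-v=(v^{\alpha}-v^{\delta,\alpha})+(v^{\delta,\alpha}-v^{\delta})+(v^{\delta}-v)$ with frequency-truncated initial data, in which $\|v^{\delta,\alpha}\|_{H^{s+1}}\leq C/\delta$ restores the missing derivative and the resulting factor $\delta^{-1}$ is compensated by the smallness $\|w\|_{H^{\sigma}}\leq C\delta^{s-\sigma}$ in a weaker norm. In that scheme the second kernel hypothesis is not applied along $v^{\alpha}$ at all, but to the regularised solution $v^{\delta}$ at fixed $\delta$ (step (c) of the paper's proof), where one needs $\sup_{\tau\in[0,T_0]}\|K^{\alpha}*v^{\delta}(\tau)-v^{\delta}(\tau)\|_{H^{s}}\to0$; this follows from the hypothesis together with the compactness of the trajectory $\{v^{\delta}(\tau)\}$ in $H^{s}$ and the uniform boundedness of $K^{\alpha}*$. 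Your splitting $K^{\alpha}*v^{\alpha}-v^{\alpha}=(K^{\alpha}*(v^{\alpha}-v)-(v^{\alpha}-v))+(K^{\alpha}*v-v)$ is a sensible device for the weak-norm estimate, but as written your argument does not establish the $H^s$ convergence.
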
 

\begin{remark}
    The assumptions on $K^{\alpha}$ in Corollary~\ref{corollary: main2} are automatically fulfilled by a family of approximations of the identity generated by an $L^1$ function of unit integral.
\end{remark}

The convergence in $H^s$ is proved adopting the idea from \cite{DaBeiraoVeiga.1994b, DaBeiraoVeiga.1994,MR1199199} of regularising the initial data. This concept has also been used in Masmoudi's work \cite{Masmoudi.2007}, where the inviscid limit of Navier-Stokes solutions is considered. A corresponding statement has been shown by Linshiz and Titi in \cite{Linshiz.2010} for the Euler-$\alpha$ model
\begin{equation}
\label{eq: euler-alpha}
\begin{cases}
    \partial_t v^{\alpha} + (u^{\alpha}\cdot\nabla) v^{\alpha} + v^{\alpha}_j \nabla u^{\alpha}_j + \nabla p^{\alpha}= 0,\\
    v^{\alpha}=u^{\alpha}-\alpha^2 \Delta u^{\alpha},\\
    \operatorname{div}v^{\alpha}= \operatorname{div}u^{\alpha}=0,\\
     v^{\alpha}(x,0)=v_0(x),
\end{cases}
\end{equation}
introduced in \cite{Holm.1998, Holm.1998b} and which turns out to be the zero-viscosity version of visco-elastic second-grade non-Newtonian complex fluid equations (see e.g.\ \cite{Dunn.1974}). However, the proof in \cite{Linshiz.2010} depends heavily on the vorticity structure of the system: In $\mathbb{R}^3$ we have
$$ \partial_t q^{\alpha} + (u^{\alpha}\cdot \nabla) q^{\alpha}=(q^{\alpha}\cdot \nabla) u^{\alpha}, $$
and in $\mathbb{R}^2$
$$ \partial_t q^{\alpha} + (u^{\alpha}\cdot \nabla) q^{\alpha}=0, $$
where $q^{\alpha}=\operatorname{curl} v^{\alpha}$. 
We conduct our proofs without using the vorticity. Moreover, we do not restrict the calculations to the $\alpha$-kernel from \eqref{eq: alpha kernel} until the end of the proof and hence allow to investigate the convergence and its rate for a more general kernel $K^{\alpha}$. 

Comparing the inviscid limit in \cite{Masmoudi.2007} (for $\nu=\alpha^2$) and the Euler-$\alpha$ limit in \cite{Linshiz.2010} with the Leray-$\alpha$ limit here, there is no difference between the convergence rates regarding the order of $\alpha$.

It is not surprising -- due to the vorticity formulation -- that the Euler-$\alpha$ model has gained more interest in the literature than the inviscid Leray-$\alpha$ equations. Apart from \cite{Linshiz.2010}, there is ample literature about the $\alpha$-convergence of the Euler-$\alpha$ equations: e.g.\ \cite{Abbate.2024} on the two-dimensional torus, \cite{Busuioc.2017,LopesFilho.2015} on a two-dimensional bounded domain with Dirichlet boundary conditions, \cite{Bardos.2010} considering vortex sheet initial data, \cite{Busuioc.2020} on the half-plane with no-slip boundary conditions  -- all of them making use of the vorticity structure of the two-dimensional system. In particular in \cite{LopesFilho.2015}, the difference between the viscosity limit of Navier-Stokes and the $\alpha$-limit of Euler-$\alpha$ regarding the boundary layer is highlighted. The authors did not end up with a Kato-like criterion as expected but overcame the boundary layer under appropriate regularity assumptions. 

To our knowledge, there has been no contribution in the literature about the $\alpha$-limit of the inviscid Leray-$\alpha$ equations. In \cite{Boutros.2023}, Onsager’s conjecture for the inviscid Leray-$\alpha$ model and other inviscid $\alpha$-models is studied. For comparisons with other $\alpha$-turbulence models see also \cite{Guermond.2003, Holm.1999}. In \cite{Cheskidov.2005}, the authors investigate the dimension of the global attractor and the energy spectrum of the viscous Leray-$\alpha$ model. The convergence of the Leray-$\alpha$ model to the Navier-Stokes equations is investigated in \cite{Cao.2009} and \cite{V.Chepyzhov.2007}.
In the context of sub-grid scale models, the Leray-$\alpha$ model is treated in \cite{Geurts.2006, Geurts.2008, PietarilaGraham.2008} and is considered as successful large eddy simulation model of turbulence for channel and pipe flows \cite{Cheskidov.2005}. 

Besides the already mentioned literature, the Euler-$\alpha$ model is treated in \cite{Busuioc.1999, Busuioc.2003, MARSDEN.2003,Marsden.2000,Shkoller.2000}. 
In \cite{Oliver.2001}, the global existence of unique weak solutions with initial vorticity in the space of Radon measures on $\mathbb{R}^2$ is derived. In \cite{Vorotnikov.2012}, dissipative (very weak) solutions of Euler-$\alpha$ are defined and their existence is shown.  
Paper \cite{Busuioc.2012} deals with the simultaneous $\alpha$- and viscosity limit of second-grade fluid equations in a bounded domain with Navier-type boundary conditions, whereas \cite{MR3345360} studies Dirichlet boundary conditions.

In the second part, we assume the initial data only in $L^2$, thus arguably allowing for turbulent behaviour. To conclude convergence of weak solutions of the Leray-$\alpha$ equations, we assume a local scaling property. This idea was used in \cite{Constantin.2018} and then in \cite{Drivas} to show convergence of a vanishing viscosity sequence of Navier-Stokes solutions to weak solutions of the Euler equations. The condition is considered far away from boundaries and problems with expected boundary layers are avoided.

The scaling property, which will be defined in \eqref{eq: scaling prop}, relates to second-order structure functions from Kolmogorov's theory of turbulence from 1941 (see \cite{Frisch.1995}). The theory suggests that in three-dimensional turbulent flow, energy is (on average) transferred from larger to smaller scales until it is dissipated through viscous effects. The range of scales where eddies disaggregate and where the dynamics, described by the Navier-Stokes equations, is dominated by the nonlinear term is called inertial range. The scaling of the second-order structure function was originally inferred from the self-similarity hypothesis of turbulent flow in the inertial range and is supported by experimental results. Due to the fact that the scaling property in \eqref{eq: scaling prop} is only assumed up to $\eta>0$ (which would correspond to the Kolmogorov dissipation scale in the case of viscosity), our assumption does not imply any regularity. 

A second important statement of turbulence theory (Kolmogorov's `Zero-th Law') is that turbulent flow is dissipative, i.e.\ it does not conserve energy as strong solutions of Euler or of Leray\mbox{-}$\alpha$ do. Note that Theorem \ref{thm: 2} allows for solutions of Euler that display such anomalous dissipation of energy.

In Section~\ref{subsection: scaling limit}, we show
\begin{theorem}
\label{thm: 2}
    Let $(v^{\alpha})_{\alpha>0}$ be a sequence of weak solutions of the inviscid Leray-$\alpha$ equations as in Proposition \ref{prop: weak sol}
with $ v^{\alpha}(0) \in H(\Omega)$. Let  $\eta(\alpha)$ be such that $\eta(\alpha)>0$ for $\alpha>0$ and $\eta(\alpha)\to0$ as $\alpha \to 0$. We assume that for any $K\subset \subset \Omega$ there exists a constant $E_K>0$ and a constant $\gamma>0$ such that
\begin{equation}
\label{eq: scaling prop}
    \underset{\alpha > 0}{\operatorname{sup}} \int_0^T \int_K |v^{\alpha}(x+y,t)-v^{\alpha}(x,t)|^2 \ dx \ dt \leq E_K |y|^{2\gamma}
\end{equation}
for $|y|< dist(K,\partial \Omega)$ and 
$|y| \geq \eta(\alpha)$. We assume that $v^{\alpha}(t)$ converge weakly in $L^2(\Omega)$ to $v^{\infty}(t)$ for almost all $t\in (0,T)$. Then $v^{\infty}$ is a weak solution of the Euler equations.
\end{theorem}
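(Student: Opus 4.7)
The goal is to pass to the limit $\alpha \to 0$ in the weak formulation of \eqref{eq: model}. For a divergence-free test field $\phi \in C_c^\infty(\Omega \times [0,T))$ with spatial support in a compact $K \subset\subset \Omega$, integrating the nonlinear term by parts (using $\operatorname{div} u^\alpha = 0$) yields
\begin{equation*}
   \int_0^T \int_\Omega v^\alpha \cdot \partial_t \phi \, dx \, dt + \int_0^T \int_\Omega (u^\alpha \otimes v^\alpha) : \nabla \phi \, dx \, dt + \int_\Omega v^\alpha(0) \cdot \phi(0) \, dx = 0.
\end{equation*}
The linear terms pass to the limit directly by the pointwise-in-$t$ weak $L^2$ convergence (combined with dominated convergence and the uniform $L^\infty_t L^2_x$ bound from Proposition~\ref{prop: weak sol}). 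The entire difficulty concentrates in identifying the distributional limit of the quadratic term $u^\alpha \otimes v^\alpha$ with $v^\infty \otimes v^\infty$.

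The key step is to upgrade the a.e.-in-$t$ weak $L^2$ convergence of $v^\alpha$ to \emph{strong} $L^2_{loc}((0,T)\times \Omega)$ convergence. Fix $K\subset\subset \Omega$ and a standard spatial mollifier $\rho_\epsilon$ at scale $\epsilon>0$. Jensen's inequality together with \eqref{eq: scaling prop} gives
\begin{equation*}
    \|\rho_\epsilon *_x v^\alpha - v^\alpha\|_{L^2((0,T)\times K)}^2 \leq C E_K \epsilon^{2\gamma},
\end{equation*}
uniformly in $\alpha$ as soon as $\eta(\alpha) \leq \epsilon$. For each fixed $\epsilon$, the family $\{\rho_\epsilon *_x v^\alpha\}_\alpha$ is equibounded in $L^\infty(0,T; H^N_{loc})$ for every $N$, and testing the weak form of \eqref{eq: model} against $\rho_\epsilon *_x \phi$ for divergence-free $\phi$ supported in $K$ shows that $\partial_t (\rho_\epsilon *_x v^\alpha)$ is uniformly bounded in $L^\infty(0,T; H^{-m}_{loc})$ for some $m$, since $u^\alpha \otimes v^\alpha \in L^\infty(0,T; L^1_{loc})$. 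An Aubin--Lions argument then produces strong $L^2((0,T)\times K)$ compactness of $\{\rho_\epsilon *_x v^\alpha\}_\alpha$ for every fixed $\epsilon$. Combined with the displayed approximation estimate and a diagonal extraction in $\epsilon\to 0$, uniqueness of the weak limit $v^\infty$ forces the full family $v^\alpha$ to converge to $v^\infty$ strongly in $L^2_{loc}$.

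The transfer to $u^\alpha$ is then routine. Since $K^\alpha \geq 0$ and $\int K^\alpha = 1$, Young's inequality for convolution (together with interior localization in the bounded-domain case, where $(I-\alpha^2\Delta)^{-1}$ is handled as a convolution-type resolvent up to boundary corrections) yields
\begin{equation*}
    \|u^\alpha - v^\infty\|_{L^2_{loc}} \leq \|v^\alpha - v^\infty\|_{L^2_{loc}} + \|K^\alpha *v^\infty - v^\infty\|_{L^2_{loc}} \xrightarrow{\alpha \to 0} 0,
\end{equation*}
so $u^\alpha \to v^\infty$ strongly in $L^2_{loc}$ as well. The product $u^\alpha \otimes v^\alpha$ is then the pairing of a strongly-convergent with a weakly-convergent sequence in $L^2_{loc}$, hence it converges in $\mathcal{D}'(K \times (0,T))$ to $v^\infty \otimes v^\infty$. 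Passing to the limit in the weak formulation delivers the weak Euler identity for $v^\infty$.

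The principal obstacle is the scale mismatch: the scaling hypothesis \eqref{eq: scaling prop} gives no information below $\eta(\alpha)$, while the kernel $K^\alpha$ itself is concentrated at scale $\alpha$, with no monotone or quantitative relation assumed between $\eta(\alpha)$ and $\alpha$. The mollification-plus-diagonal argument circumvents this by introducing a third, freely chosen scale $\epsilon$: all estimates become uniform in $\alpha$ once $\eta(\alpha)\leq \epsilon$, and the residual error $C E_K \epsilon^{2\gamma}$ is killed after compactness has been extracted by sending $\epsilon \to 0$. A secondary, standard difficulty is the need to confine test functions to $K\subset\subset \Omega$ in order to avoid boundary layers, which is exactly why both the hypothesis and the conclusion are interior statements.
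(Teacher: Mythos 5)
Your overall strategy---average at an intermediate scale, control the averaging error via the scaling hypothesis, pass to the limit in the averaged quantity using the assumed weak convergence, then remove the averaging---is essentially the paper's strategy (following Constantin et al.), and it would even yield strong $L^2_{loc}$ convergence of $v^\alpha$ as a by-product. But your key estimate is not justified as stated: for a \emph{standard} mollifier $\rho_\epsilon$ supported in the ball $\{|y|\le\epsilon\}$, Jensen's inequality reduces $\|\rho_\epsilon*_x v^\alpha-v^\alpha\|_{L^2((0,T)\times K)}^2$ to an average of $\int_0^T\int_K|v^\alpha(x-y,t)-v^\alpha(x,t)|^2\,dx\,dt$ over \emph{all} $|y|\le\epsilon$, including the region $|y|<\eta(\alpha)$ where hypothesis \eqref{eq: scaling prop} gives nothing and the only available bound is the $O(1)$ energy bound. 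The condition $\eta(\alpha)\le\epsilon$ does not make the mass of $\rho_\epsilon$ on $\{|y|<\eta(\alpha)\}$ small (take $\eta(\alpha)\sim\epsilon$), so the claimed uniform bound $CE_K\epsilon^{2\gamma}$ is false. The fix is exactly the paper's choice of kernel: take $j$ supported in the annulus $1\le|z|\le2$, so that the average $f_r$ involves only translations of size in $[r,2r]$, all admissible once $\eta(\alpha)<r$. (Alternatively, keep the ball-supported mollifier and accept an extra error of order $\epsilon^{-d}\eta(\alpha)^{d}$, which vanishes as $\alpha\to0$ at fixed $\epsilon$; but your stated estimate needs this correction.)

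The second gap is the transfer from $v^\alpha$ to $u^\alpha$. On the bounded domain, $u^\alpha=(\operatorname{Id}-\alpha^2\Delta)^{-1}v^\alpha$ with homogeneous Dirichlet conditions is not a convolution, so ``$K^\alpha\ge0$, $\int K^\alpha=1$, Young's inequality, up to boundary corrections'' is not an argument (it could be made rigorous via interior decay of the resolvent kernel, but that is real work and is not done). Moreover, strong convergence of $u^\alpha$ is more than you need. The simple and sufficient route, which the paper takes, is the energy/elliptic estimate $\alpha^2\|\nabla u^\alpha(t)\|_{L^2(\Omega)}^2\le\|v^\alpha(t)\|_{L^2(\Omega)}^2$, which gives $\|u^\alpha(t)-v^\alpha(t)\|_{H^{-1}(\Omega)}=\alpha^2\|\Delta u^\alpha(t)\|_{H^{-1}(\Omega)}\le C\alpha\|v^\alpha(t)\|_{L^2(\Omega)}\to0$; combined with the uniform $L^2$ bound this yields $u^\alpha(t)\rightharpoonup v^\infty(t)$ weakly in $L^2(\Omega)$, which, paired with the strong convergence of the averaged field (or of $v^\alpha$ itself once your first step is repaired), identifies the limit of $u^\alpha\otimes v^\alpha$. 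Finally, the Aubin--Lions detour is unnecessary and itself delicate (the time derivative of $v^\alpha$ is controlled only modulo pressure gradients): since weak $L^2$ convergence of $v^\alpha(t)$ for a.e.\ $t$ is \emph{assumed}, the convergence of the averaged fields in $L^2((0,T)\times K)$ follows directly from the pointwise bound $|(v^\alpha(t))_r(x)|\le Cr^{-d/2}\|v^\alpha(t)\|_{L^2(\Omega)}$ and dominated convergence, with no compactness-in-time argument.
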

\begin{remark}
The limit solution $v^{\infty}$ inherits the scaling property \eqref{eq: scaling prop}.
\end{remark}

\section{Preliminaries}
Throughout the paper, $C$ will be used as a generic constant. Relevant dependencies will be noted as a subscript.

 \begin{lemma}
 \label{lemma: relation norms}
Let $v\in H^s(\mathbb{R}^n)$ for $s\in \mathbb{R}$ and $u=(\operatorname{Id}-\alpha^2\Delta)^{-1}v$ for $\alpha\in(0,1]$. Then,
$$ \|u\|_{H^s} \leq \|v\|_{H^s}, \ \ \ \|u\|_{H^{s+1}} \leq \frac{1}{\alpha} \|v\|_{H^s} \ \ \ \text{ and } \ \ \ \|u\|_{H^{s+2}} \leq \frac{1}{\alpha^2} \|v\|_{H^s}.$$
\end{lemma}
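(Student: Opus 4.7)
The plan is to work on the Fourier side and reduce each of the three claimed estimates to a pointwise bound on a Fourier multiplier. Recall the definition $\|u\|_{H^s}^2 = \int_{\mathbb{R}^n} (1+|\xi|^2)^s |\hat u(\xi)|^2\, d\xi$, and note that $u = (\operatorname{Id}-\alpha^2\Delta)^{-1}v$ exactly means
\begin{equation*}
\hat u(\xi) = \frac{1}{1+\alpha^2|\xi|^2}\,\hat v(\xi).
\end{equation*}
Hence each inequality $\|u\|_{H^{s+k}} \leq \alpha^{-k}\|v\|_{H^s}$ (for $k\in\{0,1,2\}$) will follow from Plancherel as soon as we establish the pointwise bound
\begin{equation*}
\frac{(1+|\xi|^2)^{s+k}}{(1+\alpha^2|\xi|^2)^2} \leq \alpha^{-2k}(1+|\xi|^2)^s \qquad \forall \xi\in\mathbb{R}^n,
\end{equation*}
equivalently $\alpha^{2k}(1+|\xi|^2)^k \leq (1+\alpha^2|\xi|^2)^2$.

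For $k=0$ this is immediate. For $k=2$, the inequality becomes $\alpha^2(1+|\xi|^2) \leq 1+\alpha^2|\xi|^2$, which reduces to $\alpha^2 \leq 1$, holding by hypothesis. For $k=1$, writing $x:=\alpha^2|\xi|^2\geq 0$, I need $\alpha^2(1+|\xi|^2) \leq (1+x)^2 = 1+2x+x^2$, i.e.\ $\alpha^2 + x \leq 1 + 2x + x^2$, i.e.\ $\alpha^2 \leq 1+x+x^2$, which again follows from $\alpha\leq 1$. Taking square roots and integrating against $|\hat v(\xi)|^2$ finishes the proof.

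There is essentially no obstacle here; the only mild subtlety is recognising that one really needs $\alpha\leq 1$ (and not just $\alpha>0$) to absorb the low-frequency contribution in the $k=1,2$ estimates, since the multiplier $(1+\alpha^2|\xi|^2)^{-2}$ is bounded by $1$ rather than by anything decaying in $\alpha$. For large $|\xi|$ the denominator supplies the $\alpha^{-2k}$ gain; for small $|\xi|$ the gain comes for free from $\alpha\leq 1$.
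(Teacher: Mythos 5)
Your proof is correct and follows essentially the same route as the paper: both pass to Fourier variables and reduce each estimate to the pointwise multiplier bound $\alpha^{2k}(1+|\xi|^2)^k \leq (1+\alpha^2|\xi|^2)^2$, verified using $\alpha\leq 1$. Your observation that the hypothesis $\alpha\leq 1$ is genuinely needed for $k=1,2$ (to handle low frequencies) is accurate and implicit in the paper's computation as well.
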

The proof can be found below in \hyperref[subsection: relation between norms]{A1}.

\begin{proposition}[Existence of strong solutions for Leray-$\alpha$] \label{thm:strong sol}
Let $v^{\alpha}_{0} \in H^1_{}(\mathbb{R}^d)$ and divergence-free. Then, there exists $T_{\alpha}^*=T_{\alpha}^*(\|v^{\alpha}_{0}\|_{H^1},\alpha), \ T_{\alpha}^*\geq \frac{C_{\alpha}}{\|v^{\alpha}_{0}\|_{H^1}}>0$, such that for any $T<T_{\alpha}^*$ there exists a unique solution $v^{\alpha} \in \mathcal{C}([0,T];H^1_{}(\mathbb{R}^d))\cap W^{1,1}(0,T;L^2_{}(\mathbb{R}^d))$ of the Leray-$\alpha$ equations \eqref{eq: model} with initial datum $v^{\alpha}_{0}$.

Let $v^{\alpha}_{0} \in H^s_{}(\mathbb{R}^d)$, $s>d/2+1$, and divergence-free. Then, there exists $T^*=T^*(\|v^{\alpha}_{0}\|_{H^s}), \ T^*\geq \frac{C}{\|v^{\alpha}_{0}\|_{H^s}}>0$, such that for any $T<T^*$ there exists a unique solution $v^{\alpha} \in \mathcal{C}([0,T];H^s_{}(\mathbb{R}^d)) \cap W^{1,1}(0,T;H^{s-1}_{}(\mathbb{R}^d))$ uniformly in $\alpha$.
 \end{proposition}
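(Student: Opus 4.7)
My plan is to derive a priori estimates in $H^1$ and $H^s$ separately, use a Picard iteration to build a solution, and conclude uniqueness via an $L^2$-difference estimate. The crucial ingredient in the $H^s$-case is that Lemma~\ref{lemma: relation norms} controls $\|u^\alpha\|_{H^s}$ by $\|v^\alpha\|_{H^s}$ with no $\alpha$-dependent constant, which is what will produce the uniform-in-$\alpha$ existence time.

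First, I would apply $\Lambda^s := (-\Delta)^{s/2}$ to the momentum equation in \eqref{eq: model} and take the $L^2$-inner product with $\Lambda^s v^\alpha$. The transport term $\int (u^\alpha \cdot \nabla)\Lambda^s v^\alpha \cdot \Lambda^s v^\alpha\,dx$ vanishes since $\operatorname{div} u^\alpha = 0$, and the pressure term vanishes since $\operatorname{div} v^\alpha = 0$, leaving only the commutator. By the Kato--Ponce estimate,
\begin{equation*}
\|[\Lambda^s, u^\alpha \cdot \nabla] v^\alpha\|_{L^2} \leq C\bigl(\|\nabla u^\alpha\|_{L^\infty}\|v^\alpha\|_{H^s} + \|u^\alpha\|_{H^s}\|\nabla v^\alpha\|_{L^\infty}\bigr).
\end{equation*}
Since $s > d/2+1$, Sobolev embedding gives $\|\nabla v^\alpha\|_{L^\infty} \leq C\|v^\alpha\|_{H^s}$, and Lemma~\ref{lemma: relation norms} yields $\|u^\alpha\|_{H^s} \leq \|v^\alpha\|_{H^s}$ and hence $\|\nabla u^\alpha\|_{L^\infty} \leq C\|v^\alpha\|_{H^s}$, all with constants independent of $\alpha$. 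The resulting inequality $\frac{d}{dt}\|v^\alpha\|_{H^s}^2 \leq C\|v^\alpha\|_{H^s}^3$ integrates to a lifespan $T^* \geq C/\|v_0^\alpha\|_{H^s}$ depending only on $\|v_0^\alpha\|_{H^s}$. For the $H^1$-statement the same computation with $\Lambda$ in place of $\Lambda^s$ does not close directly because $\|\nabla u^\alpha\|_{L^\infty}$ is not bounded by the $H^1$-norm of $v^\alpha$; instead, the third inequality of Lemma~\ref{lemma: relation norms} gives $\|\nabla u^\alpha\|_{L^\infty} \leq C\|u^\alpha\|_{H^3} \leq C\alpha^{-2}\|v^\alpha\|_{H^1}$ (using $H^3 \hookrightarrow W^{1,\infty}$, valid for $d\leq 3$), which yields the $\alpha$-dependent lifespan $T_\alpha^* \geq C\alpha^2/\|v_0^\alpha\|_{H^1}$ as allowed by the statement.

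For existence I would set up a Picard iteration, defining $v_{n+1}^\alpha$ as the solution of the linear divergence-free transport problem $\partial_t v_{n+1}^\alpha + (u_n^\alpha \cdot \nabla) v_{n+1}^\alpha + \nabla p_{n+1}^\alpha = 0$, $u_n^\alpha = (I - \alpha^2\Delta)^{-1} v_n^\alpha$, with initial datum $v_0^\alpha$. Since $u_n^\alpha$ is two derivatives smoother than $v_n^\alpha$ by Lemma~\ref{lemma: relation norms}, each step is a classical linear transport problem with smooth divergence-free velocity and is well-posed. The energy estimate above, applied iteratively, yields a uniform-in-$n$ bound $\|v_{n+1}^\alpha\|_{L^\infty(0,T;H^s)} \leq M$ for $M = 2\|v_0^\alpha\|_{H^s}$ on a time interval $[0,T]$ independent of $n$ (and of $\alpha$ in the $H^s$-case). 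Convergence follows from a lower-regularity ($L^2$) contraction estimate on $w_n := v_{n+1}^\alpha - v_n^\alpha$, together with interpolation against the uniform $H^s$-bound. The time regularity $v^\alpha \in W^{1,1}(0,T;H^{s-1})$ (respectively $W^{1,1}(0,T;L^2)$ for the $H^1$-case) is then a direct consequence of the equation: $\partial_t v^\alpha = -P((u^\alpha \cdot \nabla) v^\alpha)$ lies in $L^\infty(0,T;H^{s-1})$ by the $H^{s-1}$-algebra property and boundedness of the Leray projector. Uniqueness is a standard $L^2$-energy estimate on the difference of two solutions, closed by Gronwall using the Lipschitz bound from the $H^s$-regularity.

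The main obstacle is obtaining the $\alpha$-independent lifespan in the $H^s$-case, which hinges on applying the commutator estimate so that only $\|u^\alpha\|_{H^s}$ (and not $\|u^\alpha\|_{H^{s+1}}$ or higher) enters: the latter would bring in an $\alpha^{-1}$-factor by Lemma~\ref{lemma: relation norms} and destroy the uniform bound. The Kato--Ponce estimate is precisely the right tool, since it balances the derivatives between $u^\alpha$ and $v^\alpha$ at level $H^s$ on one factor and $W^{1,\infty}$ on the other. For the $H^1$-regime there is no way around the loss, which is why the lifespan depends on $\alpha$ there.
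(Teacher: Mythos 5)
Your proposal is correct and follows exactly the route the paper intends: the paper gives no written proof of Proposition~\ref{thm:strong sol}, stating only that it follows by the standard energy/iteration techniques for the Euler equations, with Remark~\ref{remark: assumptions u for unif existence} isolating precisely the two kernel properties you exploit (an $\alpha$-dependent bound $\|K^\alpha*v\|_{H^{1+d/2+}}\le C_\alpha\|v\|_{H^1}$ for the first part, and the $\alpha$-uniform bound $\|K^\alpha*v\|_{H^s}\le C\|v\|_{H^s}$, i.e.\ Lemma~\ref{lemma: relation norms}, for the uniform lifespan in $H^s$). One small repair in the $H^1$ case: the Kato--Ponce commutator at $s=1$ also produces the term $\|u^\alpha\|_{H^1}\|\nabla v^\alpha\|_{L^\infty}$, which does not close in $H^1$ even with $\alpha$-dependent constants; instead differentiate the equation once so that the commutator is literally $(\nabla u^\alpha\cdot\nabla)v^\alpha$, bounded by $\|\nabla u^\alpha\|_{L^\infty}\|\nabla v^\alpha\|_{L^2}^2$ --- which is exactly the quantity your bound $\|\nabla u^\alpha\|_{L^\infty}\le C\alpha^{-2}\|v^\alpha\|_{H^1}$ is designed to handle.
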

The existence of strong solutions for Leray-$\alpha$ can be shown by similar techniques as used in the proofs for the Euler equations.
\begin{proposition}[Existence of strong solutions for Euler (e.g.\ \cite{Bedrossian.2022, Kato.1972, Majda.2010})] \label{thm:strong sol Euler}
Let $v^{}_{0} \in H^s_{}(\mathbb{R}^d)$, $s>d/2+1$, and $\operatorname{div}v_0=0$. Then, there exists $T^*=T^*(\|v^{}_{0}\|_{H^s}), \ T^*\geq \frac{C}{\|v^{}_{0}\|_{H^s}}>0$, such that for any $T<T^*$ there exists a unique solution $v^{} \in \mathcal{C}([0,T];H^s_{}(\mathbb{R}^d)) \cap W^{1,1}(0,T;H^{s-1}_{}(\mathbb{R}^d))$ of the Euler equations \eqref{eq: Euler} with initial datum $v_0$.
\end{proposition}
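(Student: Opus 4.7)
The plan is to construct smooth approximate solutions by mollification, derive uniform $H^s$ energy estimates to extract a uniform time of existence, and then pass to the limit. First I would apply the Leray projector $\mathbb{P}$ onto divergence-free fields to eliminate the pressure, rewriting \eqref{eq: Euler} as $\partial_t v + \mathbb{P}((v\cdot\nabla)v)=0$. With a standard Friedrichs mollifier $J_\varepsilon$, I would consider the regularised problem
\[
\partial_t v_\varepsilon + J_\varepsilon \mathbb{P} \bigl((J_\varepsilon v_\varepsilon \cdot \nabla) J_\varepsilon v_\varepsilon\bigr)=0,\qquad v_\varepsilon(0)=J_\varepsilon v_0.
\]
Since the right-hand side is locally Lipschitz on $H^s(\mathbb{R}^d)$, the Picard theorem yields a smooth solution $v_\varepsilon$ on a maximal interval.

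Next I would prove a uniform $H^s$ estimate. Applying $\Lambda^s=(-\Delta)^{s/2}$, testing with $\Lambda^s v_\varepsilon$, and using the Kato-Ponce commutator inequality together with the divergence-free property to cancel the non-commutator term gives
\[
\frac{d}{dt}\|v_\varepsilon\|_{H^s}^2 \leq C\|\nabla v_\varepsilon\|_{L^\infty}\|v_\varepsilon\|_{H^s}^2 \leq C\|v_\varepsilon\|_{H^s}^3,
\]
where the last step uses the Sobolev embedding $H^{s-1}(\mathbb{R}^d)\hookrightarrow L^\infty(\mathbb{R}^d)$, available because $s-1>d/2$. Integrating this differential inequality gives $\|v_\varepsilon(t)\|_{H^s}\leq 2\|v_0\|_{H^s}$ on an interval $[0,T^*]$ with $T^*\geq C/\|v_0\|_{H^s}$, independently of $\varepsilon$. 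From the equation itself, and since $H^{s-1}$ is an algebra, $\partial_t v_\varepsilon$ is uniformly bounded in $L^\infty(0,T^*;H^{s-1})$; combined with the Aubin-Lions lemma, a subsequence converges to some $v$ strongly in $C([0,T^*];H^{s'}_{\mathrm{loc}})$ for every $s'<s$, and weakly-$\ast$ in $L^\infty(0,T^*;H^s)$. This is enough to pass to the limit in the weak formulation and to recover the regularity $W^{1,1}(0,T;H^{s-1})$ from the equation.

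To get uniqueness, I would test the equation satisfied by the difference $w=v_1-v_2$ of two $H^s$-solutions against $w$ in $L^2$; the bound $\|\nabla v_i\|_{L^\infty}\leq C\|v_i\|_{H^s}$ and Gr\"onwall's inequality force $w\equiv 0$. The main obstacle I anticipate is upgrading the convergence to continuity in the strong $H^s$-topology, since the a priori estimate a priori only gives weak-$\ast$ continuity and strong continuity in weaker spaces. I would handle this by a Bona-Smith-type argument: approximating $v_0$ in $H^s$ by smoother data $v_0^{(\delta)}\in H^{s+1}$, using the uniform $H^s$ bound to obtain weak continuity of $v(t)$ into $H^s$, and then showing that $t\mapsto \|v(t)\|_{H^s}$ is continuous (via the equation plus the higher regularity of the approximants, passing $\delta\to 0$); weak continuity together with norm continuity then yields strong continuity in $H^s$.
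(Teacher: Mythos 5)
Your proposal is correct and is essentially the classical mollification/energy-estimate/Bona--Smith argument found in the references the paper cites for this proposition (e.g.\ Majda--Bertozzi and Bedrossian--Vicol); the paper itself offers no independent proof, deferring entirely to those sources. All the key points are in place: the outer mollifier preserving the commutator cancellation, the Kato--Ponce estimate with $H^{s-1}\hookrightarrow L^\infty$, the $\varepsilon$-independent lifespan from the Riccati-type inequality, and the Bona--Smith step to upgrade weak continuity to continuity in the strong $H^s$ topology.
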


 \begin{remark}
 \label{remark: assumptions u for unif existence}
The first part of Proposition \ref{thm:strong sol} crucially depends on the properties of the kernel $K^{\alpha}$ where $u^{\alpha}=K^{\alpha}*v^{\alpha}$. We need that
    $$ \|K^{\alpha}*v^{\alpha}\|_{H^{s+d/2+}} \leq C_{\alpha} \|v^{\alpha}\|_{H^s},$$
where we have used the notation that $v\in H^{l+}$ if there exists a $\delta>0$ such that $v\in H^{l+\delta}$.

For the second part, it is enough to have
$$ \|K^{\alpha}*v^{\alpha}\|_{H^s} \leq C \|v^{\alpha}\|_{H^s} $$
for $s\geq 0$ and $C$ independent of $\alpha$.
 \end{remark}
 \begin{remark}
 \label{remark: global in time unif sol}
We can easily obtain global-in-time solutions even for $d=3$ by improving the regularisation kernel $K^{\alpha}$.
\end{remark}

Let
$$ H(\Omega):= \{v\in L^2(\Omega): \operatorname{div}v=0 \text{ in } \Omega,\quad v \cdot n = 0 \text{ on } \partial \Omega\},$$
$$V(\Omega):= \{ v\in H_0^1(\Omega): \operatorname{div}v=0\text{ in } \Omega\} = H(\Omega) \cap H_0^1(\Omega).$$
We call $v^{\alpha}\in L^{\infty}(0,T;H(\Omega))$ a \emph{weak solution} of the Leray-$\alpha$ equations \eqref{eq: model} if it satisfies
$$  \int_0^T \int_{\Omega}  v^{\alpha}(t,x) \partial_t\Phi(t,x) + (u^{\alpha}(t,x) \otimes v^{\alpha}(t,x)) : \nabla \Phi(t,x) \ dx \ dt  $$ $$= \int_{\Omega}  v^{\alpha}(T,x) \Phi(T,x) -  v^{\alpha}_0(x) \Phi(0,x) \ dx$$
for any divergence-free test function $\Phi \in C^{\infty}_c(\Omega\times [0,T] )$, with 
\begin{equation*}
 v^{\alpha}(x,t)=(\operatorname{Id}-\alpha^2 \Delta)u^{\alpha}(x,t)
\end{equation*} 
and $u^{\alpha}|_{\partial \Omega}=0$.
The existence of weak solutions of Leray-$\alpha$ on a smooth bounded domain can be shown with the Galerkin method where the approximate solutions belong to the finite-dimensional space spanned by the first eigenfunctions of the Stokes operator. 
\begin{proposition}[Existence of weak solutions of Leray-$\alpha$]
\label{prop: weak sol}
    Let $\Omega \subset \mathbb{R}^d$, $d\in \{2,3\}$, be a smooth bounded domain. Let $v_0 \in H(\Omega)$ and $T>0$. Then there exists a weak solution $v^{\alpha}$ to the Leray-$\alpha$ equations such that
    $$ v^{\alpha} \in L^{\infty}(0,T; H(\Omega)) \cap C([0,T]; V'),$$
and where
$$ u^{\alpha} \in L^{\infty}(0,T; V(\Omega)\cap H^2(\Omega))$$
satisfies
$$ v^{\alpha}(x,t)=(\operatorname{Id}-\alpha^2 \Delta)u^{\alpha}(x,t).$$
Moreover,
\begin{equation}\label{admissible}
\| v^{\alpha}\|_{L^{\infty}(0,T;H(\Omega))} \leq \|v^{\alpha}(0)\|_{H(\Omega)}. 
\end{equation}
\end{proposition}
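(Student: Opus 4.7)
The plan is to adapt the classical Faedo-Galerkin construction, exploiting the smoothing $(\mathrm{Id}+\alpha^2 A)^{-1}$ (with $A=-\mathbb{P}\Delta$ the Stokes operator, $\mathbb{P}$ the Leray projector) to compensate for the absence of viscous dissipation. I would pick the $H(\Omega)$-orthonormal basis $\{w_j\}_{j\geq 1}$ of eigenfunctions of $A$ with eigenvalues $0<\lambda_1\leq\lambda_2\leq\ldots\nearrow\infty$ (all $w_j\in V\cap H^2$ by smoothness of $\partial\Omega$), set $V_n=\mathrm{span}\{w_1,\ldots,w_n\}$, and let $P_n$ denote the $L^2$-projection onto $V_n$. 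The Galerkin approximation is
$$v_n^\alpha(t)=\sum_{j=1}^n c_j^n(t)\,w_j,\qquad u_n^\alpha:=(\mathrm{Id}+\alpha^2 A)^{-1}v_n^\alpha\in V_n,$$
solving, for each $k=1,\ldots,n$,
$$(\partial_t v_n^\alpha,w_k)+((u_n^\alpha\cdot\nabla)v_n^\alpha,w_k)=0,\qquad v_n^\alpha(0)=P_n v_0$$
(the pressure disappears by restricting to divergence-free fields). Since $(\mathrm{Id}+\alpha^2 A)^{-1}$ is diagonal in the $w_j$-basis, $u_n^\alpha\in V_n$ as well, and the system reduces to a polynomial ODE in the $c_j^n$, locally solvable by Cauchy-Lipschitz.

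The crucial a priori estimate uses $v_n^\alpha\in V_n$ itself as test function. Since $\mathrm{div}\,u_n^\alpha=0$ and $u_n^\alpha|_{\partial\Omega}=0$,
$$((u_n^\alpha\cdot\nabla)v_n^\alpha,v_n^\alpha)=\tfrac{1}{2}\int_\Omega u_n^\alpha\cdot\nabla|v_n^\alpha|^2\,dx=0,$$
and hence $\|v_n^\alpha(t)\|_{L^2}\leq\|v_0\|_{L^2}$, extending the solution to $[0,T]$ and yielding \eqref{admissible} at the discrete level. Elliptic regularity for $(\mathrm{Id}+\alpha^2 A)u_n^\alpha=v_n^\alpha$ provides a uniform bound on $u_n^\alpha$ in $L^\infty(0,T;V\cap H^2)$. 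Since $d\leq 3$, Sobolev embedding puts $u_n^\alpha$ in $L^\infty_{t,x}$, so that $(u_n^\alpha\cdot\nabla)v_n^\alpha=\mathrm{div}(u_n^\alpha\otimes v_n^\alpha)$ is bounded in $L^\infty(0,T;V')$, and therefore so is $\partial_t v_n^\alpha$.

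I would then extract, by Banach-Alaoglu, a subsequence with $v_n^\alpha\overset{*}{\rightharpoonup}v^\alpha$ in $L^\infty(0,T;H(\Omega))$. The decisive step is to upgrade this to strong convergence of $u_n^\alpha$: because $(\mathrm{Id}+\alpha^2 A)^{-1}$ gains two derivatives, $\partial_t u_n^\alpha$ is bounded in $L^\infty(0,T;H^1)$, and Aubin-Lions delivers $u_n^\alpha\to u^\alpha$ in $C([0,T];H^{2-\varepsilon})\hookrightarrow C([0,T];L^\infty(\Omega))$. Paired with the weak-$*$ convergence of $v_n^\alpha$, this gives $u_n^\alpha\otimes v_n^\alpha\to u^\alpha\otimes v^\alpha$ in the sense of distributions, which is enough to pass to the limit against any divergence-free $\Phi\in C_c^\infty(\Omega\times[0,T])$ in the weak formulation.

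The main obstacle is precisely this passage to the limit in the convective term -- the usual sticking point in Galerkin proofs -- but the $\alpha$-smoothing makes it essentially routine. The remaining items are standard: the relation $v^\alpha=(\mathrm{Id}-\alpha^2\Delta)u^\alpha$ (modulo a gradient absorbed into the pressure) survives because $(\mathrm{Id}+\alpha^2 A)^{-1}$ is bounded from $H(\Omega)$ into $V\cap H^2$, the bound \eqref{admissible} transfers by weak lower semicontinuity of the $L^2$-norm, and the regularity $v^\alpha\in C([0,T];V')$ follows from $\partial_t v^\alpha\in L^\infty(0,T;V')$, which is read off the limit equation.
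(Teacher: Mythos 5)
Your proposal is correct and follows exactly the route the paper indicates (the paper gives no detailed proof, only the remark that existence follows from the Galerkin method with approximations in the span of the first Stokes eigenfunctions): Galerkin with Stokes eigenfunctions, the cancellation $((u_n^\alpha\cdot\nabla)v_n^\alpha,v_n^\alpha)=0$ for the uniform $L^2$ bound, elliptic regularity for the filter, and Aubin--Lions compactness for $u_n^\alpha$ to pass to the limit in the convective term. You also correctly identify the one genuine subtlety on a bounded domain, namely that the filter must be taken as $(\mathrm{Id}+\alpha^2 A)^{-1}$ with $A$ the Stokes operator so that $u^\alpha$ stays divergence-free, whence the relation $v^\alpha=(\mathrm{Id}-\alpha^2\Delta)u^\alpha$ holds modulo a gradient absorbed into the pressure.
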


For the Euler equations $(\alpha=0)$, the existence of admissible weak solutions (i.e. such that satisfy an energy inequality like~\eqref{admissible}) for any initial data in $H$ is still open. Non-admissible weak solutions have been produced in~\cite{wiedemann2011}.

\section{Convergence of strong solutions in $H^s$}  
In this section, we show Theorem \ref{thm:main2} and Corollary \ref{corollary: main2}, i.e.\ the convergence of strong solutions of the Leray-$\alpha$ equations to strong solutions of the Euler equations on the whole space. The proof is based on Masmoudi's work in \cite{Masmoudi.2007} where the inviscid limit of solutions of the Navier-Stokes equations is considered. Berselli and Bisconti have used these techniques to show the convergence of solutions of the Euler-Voigt equations to solutions of Euler in \cite{Berselli.2012}. Similar calculations have been done by Linshiz and Titi in \cite{Linshiz.2010} for the $\alpha$-limit of the Euler-$\alpha$ equations. While the authors in \cite{Linshiz.2010} utilise the vorticity structure of the Euler-$\alpha$ model to handle the additional term in the equation compared with the Leray-$\alpha$ model, here, we do not need to make use of the vorticity.

The proof is divided into two parts. First, the strong convergence in $H^s$ is proven. Secondly, we show \eqref{eq: thm main1 interpol}. Corollary \ref{corollary: main2} is a byproduct, as we keep the kernel general, as far as possible, in our calculations. Specifically, until further notice, we assume only that for all $l \geq 0$ there exists $C>0$ such that
\begin{equation}
\label{eq: kernel assumption}
    \|K^{\alpha}*v\|_{H^l} \leq C \|v\|_{H^l} 
\end{equation}
with $C$ independent of $\alpha$. Note that the existence result for $v_0^{\alpha}\in H^s$ in Proposition~\ref{thm:strong sol} remains true (Remark \ref{remark: assumptions u for unif existence}).

We are going to use the following estimates for divergence-free fields for $s\in \mathbb{R}$:
\begin{equation}
\label{eq: est 1}
    \|(u\cdot \nabla) v\|_{H^s} \leq C (\|u\|_{H^s} \|v\|_{H^s} + \|u\|_{L^{\infty}} \|v\|_{H^{s+1}}), \text{ for $s>d/2+1$,}
\end{equation}
\begin{equation}\label{eq: est 0}
 |\langle (u\cdot \nabla)v,v \rangle_{H^s} |\leq C \|\nabla u\|_{H^s} \|v\|_{H^s}^2, \text{ for $s>d/2$,}
\end{equation}
\begin{equation}
    \label{eq: est 2}
    |\langle (u\cdot \nabla)v,v \rangle_{H^s} | \leq C \|u\|_{H^s} \|v\|_{H^s}^2,\text{ for $s>d/2+1$,}
\end{equation}
and
\begin{equation}
\begin{aligned}
    \label{eq: est 4}
    &|\langle (u\cdot \nabla)v,v \rangle_{H^s} |\\
		&\quad\leq C (\|u\|_{H^l} \|v\|_{H^s} + \|v\|_{H^l} \|u\|_{H^s}) \|v\|_{H^s}, \text{ for $s\geq 0,\quad l>d/2+1$.}
		\end{aligned}
\end{equation}
A proof of \eqref{eq: est 1} can be found in \cite[Proposition A.29]{Bedrossian.2022}. Estimate~\eqref{eq: est 0} is Corollary 2.1 in~\cite{Feff.2014}. Estimate \eqref{eq: est 2} is a special case ($s=l$) of \eqref{eq: est 4}, which follows from the commutator estimate in \cite{Kato.1988}.

\begin{proof}[Proof of Theorem 1]
Let 
\begin{equation*}v^{\delta}_0 = \mathcal{F}^{-1}\left(\chi_{\{|\xi|\leq 1/\delta\}}(\xi) \mathcal{F}(v_0)\right) \quad\text{and}\quad v^{\alpha,\delta}_0 = \mathcal{F}^{-1}\left(\chi_{\{|\xi|\leq 1/\delta\}}(\xi) \mathcal{F}(v^{\alpha}_0)\right)
\end{equation*}
for $\delta>0$. It follows
\begin{equation}
    \label{eq: x}
    \|v_0^{\delta}\|_{H^s} \leq \|v_0\|_{H^s}, \ \  \ \ \ \ \ \ \|v_0^{\delta}\|_{H^{s+1}} \leq \delta^{-1} \|v_0\|_{H^s}
\end{equation}
and for $l\in[0,s]$
\begin{equation}
    \label{eq: y}
    \|v_0^{\delta}-v_0\|_{H^l} \leq \delta^{s-l} \|v_0\|_{H^{s}}.
\end{equation}
The same estimates hold for $v_0^\delta$ and $v_0$ replaced by $v_0^{\alpha,\delta}$ and $v_0^\alpha$, respectively.

Let $v^{\delta}$ be the solution of the Euler equations for initial data $v_0^{\delta}$. Let $v^{\alpha,\delta}$ be the solution of the Leray-$\alpha$ equations for initial data $v_0^{\alpha,\delta}$. We will show that, for any $s>d/2+1$ and $\epsilon>0$, we find first a $\delta>0$ and then $\alpha =\alpha_\delta>0$ such that
\begin{equation}\label{eq: to show}
\begin{aligned}
 \|v^{\alpha} - v \|_{L^{\infty}(0,T;H^s)}\leq &\|v^{\alpha} - v^{\delta,\alpha}\|_{L^{\infty}(0,T;H^s)}\\
& + \|v^{\delta,\alpha} - v^{\delta}\|_{L^{\infty}(0,T;H^s)} + \|v^{\delta}- v \|_{L^{\infty}(0,T;H^s)}\\
		&\quad< \epsilon.
		\end{aligned}
\end{equation}
First, notice that $v, \ v^{\alpha}, \ v^{\delta}, \ v^{\delta,\alpha}$ exist on a time interval $[0,T_0]$ which is independent of $\alpha$ and $\delta$: Indeed, let $(\phi,\xi) \in \{(v,v), (v^{\delta},v^{\delta}), (v^{\alpha}, u^{\alpha}), (v^{\delta,\alpha}, u^{\delta,\alpha})\}$ where $u^{\alpha}:=K^{\alpha}*v^{\alpha}$ and $u^{\delta,\alpha}:=K^{\alpha}*v^{\delta,\alpha}$. We obtain from
$\partial_t \phi + (\xi \cdot \nabla) \phi = -\nabla p$
that, for $s>d/2+1$, by \eqref{eq: est 2}, \eqref{eq: est 4} and by Lemma \ref{lemma: relation norms} (or for a kernel such that \eqref{eq: kernel assumption})
\begin{equation}
    \label{eq: H^s estimate}
    \frac{d}{dt} \|\phi\|^2_{H^s} \leq C\|\xi\|_{H^s} \|\phi\|_{H^s}^2 \leq  C\|\phi\|_{H^s}^3,
\end{equation}
\begin{equation}
    \label{eq: H^s+1 estimate}
    \frac{d}{dt} \|\phi\|^2_{H^{s+1}} \leq C (\|\xi\|_{H^s} \|\phi\|_{H^{s+1}}^2 + \|\phi\|_{H^s} \|\xi\|_{H^{s+1}} \|\phi\|_{H^{s+1}}) \leq C \|\phi\|_{H^s} \|\phi\|_{H^{s+1}}^2.
\end{equation}
Let $\alpha^*$ be such that $v_0^{\alpha^*}\neq 0$. Since $\|v_0^{\alpha}-v_0\|_{H^s} \to 0$ as $\alpha \to 0$, there exists $\alpha_0$ such that 
\begin{equation}
\label{eq: v_0^a indep of alpha}
    \|v_0^{\alpha}\|_{H^s} \leq \|v_0\|_{H^s} + \|v_0^{\alpha^*}\|_{H^s}
\end{equation}
for all $\alpha \leq \alpha_0$. Since $\|\phi(0)\|_{H^s} \leq C(\|v_0\|_{H^s}, \|v_0^{\alpha}\|_{H_s}) \leq C(\|v_0\|_{H^s}) \leq K$ (by  \eqref{eq: x} and \eqref{eq: v_0^a indep of alpha}), we can find a $T_0>\frac{C}{K}>0$, which is independent of $\alpha \leq \alpha_0$ and $\delta$, such that the $H^s$ norm of $\phi$ stays bounded during this time interval:
\begin{equation}
\label{eq: phi leq K}
    \|\phi(t)\|_{H^s} \leq C(T_0,K),
\end{equation}
and consequently by equations \eqref{eq: x} and \eqref{eq: H^s+1 estimate}
\begin{equation}
\label{eq: abc}
\begin{aligned}
    \|\phi^{\delta}(t)\|_{H^{s+1}} &\leq \|\phi^{\delta}(0)\|_{H^{s+1}} \ \operatorname{exp}\left(\int_0^t\|\phi^{\delta}(\tau)\|_{H^{s}}d\tau\right) \\
		&\leq  \frac{K}{\delta} \ \operatorname{exp}(C(T_0,K)) \leq \frac{C(T_0,K)}{\delta}
		\end{aligned}
\end{equation}
where $\phi^{\delta} \in \{v^{\delta}, v^{\delta,\alpha}\}$.\\

Now, we have all the ingredients to prove \eqref{eq: to show}. Let $C$ be a generic constant possibly depending on $K$ and $T_0$.\\

(a) Let $\frac d2<\sigma<s-1$. Setting $w^{\delta,\alpha}=v^{\delta, \alpha}-v^{\alpha}$, we have
\begin{equation*}
 \partial_tw^{\delta,\alpha} + (K^{\alpha}*w^{\delta,\alpha} \cdot \nabla)v^{\delta,\alpha} + (K^{\alpha}*v^{\alpha} \cdot \nabla) w^{\delta,\alpha} = -\nabla (p^{\delta,\alpha}-p^{ \alpha}).
\end{equation*}

Taking the $H^s$-inner product of this equation with $w^{\delta,\alpha}(t)$, by Lemma~\ref{lemma: relation norms} (or assumption~\eqref{eq: kernel assumption}) and equations \eqref{eq: est 1} and \eqref{eq: est 2} together with the Sobolev embedding $H^\sigma\subset L^\infty$,
\begin{equation}
\label{sigmaest}
\begin{aligned}
& \partial_t \|w^{\delta,\alpha} \|_{H^s}^2 \\ &\leq C(|\langle (K^{\alpha}*w^{\delta,\alpha} \cdot \nabla)v^{\delta,\alpha}, w^{\delta,\alpha} \rangle_{H^s}| + |\langle (K^{\alpha}*v^{\alpha} \cdot \nabla) w^{\delta,\alpha}, w^{\delta,\alpha}\rangle_{H^s}|)\\
&\leq C (\|w^{\delta,\alpha}\|_{H^{\sigma}} \| v^{\delta,\alpha}\|_{H^{s+1}}\|w^{\delta,\alpha}\|_{H^s}) +   C(\|v^{\alpha}\|_{H^s} + \|v^{\delta,\alpha}\|_{H^s} ) \|w^{\delta,\alpha}\|_{H^s}^2.
\end{aligned}
\end{equation}
Note that $ \| v^{\delta,\alpha}\|_{H^{s+1}} \leq \frac{C}{\delta}$ by~\eqref{eq: abc}. To compensate for the factor $\delta^{-1}$, we estimate, using~\eqref{eq: est 0} and $\sigma+1<s$, 
\begin{equation*} 
\begin{aligned}
\partial_t \|w^{\delta,\alpha} \|_{H^\sigma}^2 &\leq C(|\langle (K^{\alpha}*w^{\delta,\alpha} \cdot \nabla)v^{\delta,\alpha}, w^{\delta,\alpha} \rangle_{H^\sigma}| + |\langle (K^{\alpha}*v^{\alpha} \cdot \nabla) w^{\delta,\alpha}, w^{\delta,\alpha}\rangle_{H^\sigma}|)\\
&\leq C (\|w^{\delta,\alpha}\|_{H^{\sigma}} \| v^{\delta,\alpha}\|_{H^{\sigma+1}}\|w^{\delta,\alpha}\|_{H^\sigma}) +  C \|K^{\alpha}*\nabla v^{\alpha}\|_{H^\sigma}  \|w^{\delta,\alpha}\|_{H^\sigma}^2\\
&\leq  C (\|v^{\delta,\alpha}\|_{H^s}+\|v^{\alpha}\|_{H^s})  \|w^{\delta,\alpha}\|_{H^\sigma}^2.
\end{aligned}
\end{equation*}
By~\eqref{eq: phi leq K}, $\|v^{\delta,\alpha}\|_{H^s}+\|v^{\alpha}\|_{H^s}$ is bounded in time, hence from~\eqref{eq: y} it follows that 
\begin{equation*}
 \|w^{\delta,\alpha} \|_{H^\sigma}\leq C \delta^{s-\sigma}.
\end{equation*} 
Inserting this back into~\eqref{sigmaest} and using Gronwall, we arrive at

\begin{equation*}
 \|v^{\delta, \alpha}-v^{\alpha}\|_{L^{\infty}(0,T_0;H^s)} \leq C(\|v_0^{\delta,\alpha}-v_0^\alpha\|_{H^s}+\delta^{s-\sigma-1}T_0).
\end{equation*}

(b) Completely analogous arguments yield 

\begin{equation*}
 \|v^{\delta}-v\|_{L^{\infty}(0,T_0;H^s)} \leq C(\|v_0^{\delta}-v_0\|_{H^s}+\delta^{s-\sigma-1}T_0).
\end{equation*}

(c) Setting $w^{\delta}=v^{\delta, \alpha}-v^{\delta}$,
$$ \partial_t w^{\delta} + (K^{\alpha}*w^{\delta} \cdot \nabla) v^{\delta, \alpha} + (K^{\alpha}*v^{\delta} \cdot \nabla) w^{\delta}+  ((K^{\alpha}*v^{\delta}-v^{\delta}) \cdot \nabla) v^{\delta} = -\nabla (p^{\delta,\alpha}-p^{\delta}).$$
Taking the $H^s$ inner product with $w^{\delta}(t)$,
\begin{equation*}
\begin{aligned}
&\partial_t \|w^{\delta} \|_{H^s}^2 \leq C(|\langle (K^{\alpha}*w^{\delta} \cdot \nabla) v^{\delta, \alpha}, w^{\delta}\rangle_{H^s}| + |\langle (K^{\alpha}*v^{\delta} \cdot \nabla) w^{\delta}, w^{\delta}\rangle_{H^s}|\\
&\quad\quad+  |\langle ((K^{\alpha}*v^{\delta}-v^{\delta}) \cdot \nabla) v^{\delta},w^{\delta} \rangle_{H^s}|)\\
&\leq C \|w^{\delta}\|_{L^\infty} \| v^{\delta, \alpha}\|_{H^{s+1}} \|w^{\delta}\|_{H^s} + C(\|v^{\delta}\|_{H^s} + \|v^{\delta,\alpha}\|_{H^s}) \|w^{\delta}\|_{H^s}^2\\ 
&+C(\|K^{\alpha}*v^{\delta}-v^{\delta}\|_{L^\infty} \|v^{\delta}\|_{H^{s+1}} +  C\|K^{\alpha}*v^{\delta}-v^{\delta}\|_{H^s} \|v^{\delta}\|_{H^{s}}) \|w^{\delta}\|_{H^s}. 
\end{aligned}
\end{equation*}
On the one hand, $ \| v^{\delta,\alpha}\|_{H^{s+1}}  \leq \frac{C}{\delta}$ and $ \| v^{\delta}\|_{H^{s+1}} \leq \frac{C}{\delta}$ by \eqref{eq: abc}. On the other hand, for $d/2<\sigma<  s-1$, we can estimate similarly as in (a) as follows:
\begin{equation*}
\begin{aligned}
&\partial_t \|w^{\delta} \|_{H^\sigma}^2 \leq C(|\langle (K^{\alpha}*w^{\delta} \cdot \nabla) v^{\delta, \alpha}, w^{\delta}\rangle_{H^\sigma}| + |\langle (K^{\alpha}*v^{\delta} \cdot \nabla) w^{\delta}, w^{\delta}\rangle_{H^\sigma}|\\
&\quad\quad+  |\langle ((K^{\alpha}*v^{\delta}-v^{\delta}) \cdot \nabla) v^{\delta},w^{\delta} \rangle_{H^\sigma}|)\\
&\leq C \|w^{\delta}\|_{H^\sigma} \| v^{\delta, \alpha}\|_{H^{\sigma+1}} \|w^{\delta}\|_{H^\sigma} + C \|K^\alpha*v^{\delta}\|_{H^{\sigma+1}} \|w^{\delta}\|_{H^\sigma}^2\\ 
&+C\|K^{\alpha}*v^{\delta}-v^{\delta}\|_{H^\sigma} \|v^{\delta}\|_{H^{\sigma+1}}\|w^{\delta}\|_{H^\sigma}. 
\end{aligned}
\end{equation*}
As $\sigma+1<s$ and all relevant $H^s$ norms in this estimate are bounded, we can use Gronwall to obtain  
\begin{equation}
\label{eq: Besov l}
     \|w^{\delta}(t) \|_{H^{\sigma}} \leq C(  \|w^{\delta}(0) \|_{H^{\sigma}} + \sup_{\tau\in[0,T_0]}\|(K^{\alpha}*v^{\delta}-v^{\delta})(\tau)\|_{H^{\sigma}}).
\end{equation}

Consider now the specific kernel~\eqref{eq: alpha kernel}. Then, by Lemma \ref{lemma: relation norms}, \eqref{eq: phi leq K}, \eqref{eq: abc} and as $\sigma < s-1$,
\begin{equation*}
\begin{aligned}
 \|(K^{\alpha}*v^{\delta}-v^{\delta})(t)\|_{H^{\sigma}} &= 
\alpha^2 \|\Delta u^{\delta} (t)\|_{H^{\sigma}} \leq  \alpha^2 \|u^{\delta}(t) \|_{H^{\sigma+2}} \leq \alpha^2 \|u^{\delta} (t)\|_{H^{s+1}} \\
&\leq  \frac{\alpha^2}{\alpha} \|v^{\delta} (t)\|_{H^s} \leq  \alpha C,
\end{aligned}
\end{equation*}
$$ \|(K^{\alpha}*v^{\delta}-v^{\delta})(t)\|_{H^s}   = 
\alpha^2 \|\Delta u^{\delta} (t) \|_{H^s} \leq  \alpha^2 \|u^{\delta} (t) \|_{H^{s+2}} \leq  \frac{\alpha^2}{\alpha} \|v^{\delta}(t)\|_{H^{s+1}} = \alpha \frac{C}{\delta}.$$
Consequently,
$$ \|v^{\delta, \alpha}-v^{\delta}\|_{L^{\infty}(0,T_0;H^s)} \leq C \left(\|v^{\alpha}_0-v^{}_0\|_{H^s} + \frac{\|v^{\delta, \alpha}(0)-v^{\delta}(0) \|_{H^{\sigma}} +\alpha}{\delta} T_0 +\frac{\alpha}{\delta} T_0\right).$$
Putting everything together, we derived in (a)-(c)
\begin{equation*}
\begin{aligned} 
&\|v^{\alpha}-v^{}\|_{L^{\infty}(0,T_0;H^s)}\\ 
&\leq  C \left(\|v^{\alpha}_0-v^{}_0\|_{H^s} + \frac{\|v^{\alpha}(0)-v^{}(0) \|_{H^{\sigma}} +\alpha}{\delta} T_0 +\frac{\alpha}{\delta} T_0 + \|v^{\delta}_0-v^{}_0\|_{H^s} + \delta^{s-\sigma-1} T_0\right).
\end{aligned}
\end{equation*}

Given $\epsilon>0$, we take first $\delta$ so small that the last two expressions together are less than $\frac\epsilon2$, and then $\alpha$ (depending on $\delta$) sufficiently small so that the remaining terms become less than $\frac\epsilon2$. It is easy to see that the same conclusions can be drawn for any kernel satisfying the assumptions of Corollary~\ref{corollary: main2}.\\

Let the solution of the Euler equations exist on $[0, T^*)$. It remains to show that the solutions of the Leray-$\alpha$ equations exist as long as the Euler solutions and hence $\|v^{\alpha}-v^{}\|_{L^{\infty}(0,T;H^s)} \to 0 $ as $\alpha \to 0$ for all $0<T<T^*$. To this end, we iterate the argument by solving the Leray-$\alpha$ equations for initial data $v^{\alpha}(T_k)$ for $\alpha \leq \operatorname{min}\{\alpha_0, ..., \alpha_k\}$ and where $\alpha_k$ such that $\|v^{\alpha}(T_k)\|_{H^s} \leq \|v(T_k)\|_{H^s} + \|v_0^{\alpha^*}\|_{H^s}$ for all $\alpha \leq \alpha_k$ (cf. equation \eqref{eq: v_0^a indep of alpha}). We can continue the solution of the Leray-$\alpha$ equations up to any $T<T^*$ in a finite number of iterations and we can define $\overline{\alpha}$ as $\overline{\alpha}:=\operatorname{min}\{\alpha_0, ..., \alpha_{\overline{k}}\}$ where $\overline{k}$ is the number of iterations needed.\\

To show the convergence rate in $H^{s'}, 0\leq s'<s$, let us assume that $v,\ v^{\alpha}$ are the solutions of the Euler resp. Leray-$\alpha$ system on some mutual time interval $[0,T]$ with
\begin{equation}
\label{eq: Abschätzung v}
    \|v\|_{L^{\infty}(0,T;H^s)}, \|v^{\alpha}\|_{L^{\infty}(0,T;H^s)} \leq C(T; \|v_0\|_{H^s}),
\end{equation}
which is justified from the above arguments. Defining $w^{}:=v^{\alpha}-v^{}$,
$$ \partial_t w^{}+ ((K^{\alpha}*v^{\alpha} - v^{\alpha})\cdot \nabla) v^{\alpha} + (w^{} \cdot \nabla) v^{\alpha} + (v^{} \cdot \nabla) w^{} = -\nabla (p^{\alpha}-p^{}).$$
With similar calculations as before (using Lemma \ref{lemma: Besov l} in \hyperref[subsection: USeful estimates]{A2} for the case $0\leq s'\leq d/2$), we obtain for $0\leq s' \leq s-1$
$$ \|w(t)\|_{H^{s'}_{}} \leq C\left(\|w(0)\|_{H^{s'}_{}} + \|K^{\alpha}*v^{\alpha} - v^{\alpha}\|_{H^{s'}_{}}t\right).  $$
Now, for $0\leq s' \leq s-1$ and by Lemma \ref{lemma: relation norms} and the Gagliardo-Nirenberg inequality
$$ \|(K^{\alpha}*v^{\alpha}-v^{\alpha})(t)\|_{H^{s'}} = \|\alpha^2 \Delta u^{\alpha}(t)\|_{H^{s'}} \leq \alpha^2  \| u^{\alpha}(t)\|_{H^{s'+2}} \leq  \alpha^{\iota} C$$
where $C=C(\|v_0\|_{H^s}, T)$ and $\iota=s-s'$ for $s-2\leq s' \leq s-1$ and $\iota=2$ for $0\leq s'\leq s-2$. A convergence rate for $s' \in (s-1,s)$ can be calculated by interpolation or by regularisation of the initial data as before.
\end{proof}

\section{Convergence of weak solutions with local scaling property}
\label{subsection: scaling limit}
In this section, we show Theorem \ref{thm: 2}, i.e.\ the convergence of weak solutions of the Leray-$\alpha$ equations satisfying the scaling property \eqref{eq: scaling prop} to weak solutions of the Euler equations on a smooth bounded domain $\Omega \subset \mathbb{R}^d$, $d\in \{2,3\}$. The conditions are considered far away from the boundaries so that problems with expected boundary layers are avoided.
\begin{proof}[Proof of Theorem 2]\ \\
We follow the idea of the proof in \cite{Constantin.2018}: Let $j(z)$ be a nonnegative smooth function supported in $1<|z|<2$ with $j(z)=j(-z)$ and $\int_{\mathbb{R}^d} j(z) dz = 1$. For a fixed compact set $K \subset \subset \Omega$, for a function $f\in L^1_{loc}(\Omega)$, $x\in K$ and $0<2r<\operatorname{dist}(K,\partial\Omega)$, we define
\begin{equation}
    \label{v_r}
    f_r(x) = \int_{1\leq|z|\leq2} f(x-rz)j(z) \ dz.
\end{equation}
To prove our assertion it suffices to show
\begin{equation}
\label{to prove N}
    \int_0^T \int_{\Omega} (u^{\alpha} \otimes v^{\alpha}) : \nabla \Phi \ dx \ dt \to \int_0^T \int_{\Omega} (v^{\infty} \otimes v^{\infty}) : \nabla \Phi \ dx \ dt \ \ \ \text{ for } \alpha \to 0
\end{equation}
for any fixed, divergence-free test function $\Phi \in C^{\infty}_c(\Omega \times [0,T])$. The linear terms converge by the assumption that $v^{\alpha}(t)$ converge weakly in $L^2(\Omega)$ to $v^{\infty}(t)$ for almost all $t\in (0,T)$.

In order to prove \eqref{to prove N}, we show that for every $\epsilon>0$ there exist $\delta_1,\delta_2>0$ such that for all $\alpha \leq \delta_1$ and $r\leq \delta_2 <\frac{1}{2} \operatorname{dist}(K,\partial\Omega)$,
\begin{equation*}
\begin{aligned}
 &\left| \int_0^T \int_{\Omega} (u^{\alpha} \otimes v^{\alpha}) : \nabla \Phi \ dx \ dt - \int_0^T \int_{\Omega} (v^{\infty} \otimes v^{\infty}) : \nabla \Phi \ dx \ dt \right| \\
 &\leq \left| \int_0^T \int_{\Omega} (u^{\alpha} \otimes v^{\alpha}) : \nabla \Phi \ dx \ dt - \int_0^T \int_{\Omega} \left(u^{\alpha}\otimes (v^{\alpha})_r\right) : \nabla \Phi \ dx \ dt \right| \\
&+ \left| \int_0^T \int_{\Omega} \left(u^{\alpha} \otimes (v^{\alpha})_r\right) : \nabla \Phi \ dx \ dt - \ \int_0^T \int_{\Omega} \left(v^{\infty} \otimes (v^{\infty})_r\right) : \nabla \Phi \ dx \ dt \right|\\
& + \left| \int_0^T \int_{\Omega} \left(v^{\infty} \otimes (v^{\infty})_r\right) : \nabla \Phi \ dx \ dt - \int_0^T \int_{\Omega} (v^{\infty} \otimes v^{\infty}) : \nabla \Phi \ dx \ dt \right|\\
& =: A + B+C < \epsilon
\end{aligned}
\end{equation*}
for a fixed compact set $K$ such that $\operatorname{supp}(\Phi(t)) \subsetneq K$ for all $t \in [0,T]$. We are going to treat the three different parts $A$ to $C$ separately. 

\subsection*{A}

\begin{equation*}
\begin{aligned}
 A&= \left| \int_0^T \int_{\Omega} (u^{\alpha} \otimes v^{\alpha}) : \nabla \Phi \ dx \ dt - \int_0^T \int_{\Omega} \left(u^{\alpha} \otimes (v^{\alpha})_r\right) : \nabla \Phi \ dx \ dt \right| \\
& = \left| \int_0^T \int_{K} u^{\alpha} \otimes \left(v^{\alpha}-(v^{\alpha})_r\right) : \nabla \Phi \ dx \ dt \right| \\
&\leq \|\nabla \Phi \|_{L^{\infty}(0,T;L^{\infty}(K))} \ \left| \int_0^T \int_{K} u^{\alpha}\otimes \left(v^{\alpha}-(v^{\alpha})_r\right) \ dx \ dt \right|.
\end{aligned}
\end{equation*}
Now, for $i,j \in \{1, ...,d\}$,
\begin{equation*}
\begin{aligned}
& \left| \int_0^T \int_{K} u^{\alpha}_i  (v^{\alpha}_j-(v^{\alpha}_j)_{r}) \ dx \ dt \right|\\
& = \left| \int_0^T \int_{K} u^{\alpha}_i(x,t)  \left[ \int_{1\leq |z|\leq 2} \Big( v^{\alpha}_j(x,t)-v^{\alpha}_j (x-rz,t)\Big) j(z) \ dz  \right] \ dx \ dt \right| \\
& \leq \left( \int_{1\leq |z|\leq 2} |j(z)| \left(\int_0^T \left(\int_{K} |u^{\alpha}_i(x,t)|^2  \ dx\right)^{\frac{1}{2}}  \   \left(\int_{K} |v^{\alpha}_j(x,t)-v^{\alpha}_j (x-rz,t)|^2 \ dx\right)^{\frac{1}{2}}  \ dt \right)^2 \ dz \right)^{\frac{1}{2}}\\
& \leq \| u^{\alpha} \|_{L^{\infty}(0,T;L^2(K))}  \left( \int_{1\leq |z|\leq 2} |j(z)| \int_0^T   \int_{K} |v^{\alpha}_j(x,t)-v^{\alpha}_j (x-rz,t)|^2 \ dx  \ dt  \ dz \right)^{\frac{1}{2}}\\
& \leq \| u^{\alpha} \|_{L^{\infty}(0,T;L^2(K))}  \left( \int_{1\leq |z|\leq 2} |j(z)| \ E_K |rz|^{2\gamma}  \ dz \right)^{\frac{1}{2}} \leq C \| u^{\alpha} \|_{L^{\infty}(0,T;L^2(K))}   E_K^{\frac{1}{2}}  |r|^{\gamma}  
\end{aligned}
\end{equation*}
using the Fubini-Tonelli theorem, Jensen's inequality, Hölder's inequality and assumption \eqref{eq: scaling prop} for $\alpha$ small enough such that $\eta(\alpha) < r$. Since $v^{\alpha}=u^{\alpha}-\alpha^2 \Delta u^{\alpha}$ and consequently $\|u^{\alpha}(t)\|_{L^2(\Omega)} \leq C  \|v^{\alpha}(t)\|_{L^2(\Omega)}$, we obtain
$$ \| u^{\alpha} \|_{L^{\infty}(0,T;L^2(K))} \leq \| u^{\alpha} \|_{L^{\infty}(0,T;L^2(\Omega))} \leq C\|v^{\alpha} \|_{L^{\infty}(0,T;L^2(\Omega))} \leq E.$$
We conclude
$$ A = \left| \int_0^T \int_{\Omega} (u^{\alpha} \otimes v^{\alpha}) : \nabla \Phi \ dx \ dt - \int_0^T \int_{\Omega} \left(u^{\alpha} \otimes (v^{\alpha})_r\right) : \nabla \Phi \ dx \ dt \right| \leq C_{\Phi}   E  E_K^{\frac{1}{2}} |r|^{\gamma}$$
for $\alpha$ small enough such that $\eta(\alpha) < r$ and with $2r<\operatorname{dist}(K,\partial\Omega)$.

\subsection*{B}
$$ B = \left| \int_0^T \int_{\Omega} (u^{\alpha} \otimes (v^{\alpha})_r) : \nabla \Phi \ dx \ dt - \ \int_0^T \int_{\Omega} (v^{\infty} \otimes (v^{\infty})_r) : \nabla \Phi \ dx \ dt \right|$$
$$ = \left| \int_0^T \int_{K} \left(u^{\alpha} \otimes [(v^{\alpha})_r-(v^{\infty})_r]\right) : \nabla \Phi \ dx \ dt + \ \int_0^T \int_{K} ((u^{\alpha}-v^{\infty}) \otimes (v^{\infty})_r) : \nabla \Phi \ dx \ dt \right|. $$

First, we argue as in \cite{Constantin.2018} and observe that $(v^{\alpha}(t))_r(x)$ converges pointwise to $(v^{\infty}(t))_r(x)$ in $K$ at fixed $r$ since $v^{\alpha}(t)$ converges weakly to $v^{\infty}(t)$ in $L^2(\Omega)$ for a.e.\ $t\in (0,T)$. Indeed, we can write
$$ (v^{\alpha}(t))_r(x)=\int_{1\leq|z|\leq2} v^{\alpha}\left(t,x-rz\right) \ j(z) \ dz = \int_{\{x\}-rA} v^{\alpha}(t,y) \ j\left(\frac{x-y}{r}\right) \  r^{-d} \ dy$$
where $A:=\{z\in \mathbb{R}^d \ | \ 1\leq |z|\leq 2\}.$ Now,
$$ |(v^{\alpha}(t))_r(x)| \leq \int_{\{x\}-rA} \left| v^{\alpha}(t,y) \ j\left(\frac{x-y}{r}\right) \  r^{-d}\right| \ dy $$
$$ \leq \|v^{\alpha}(t)\|_{L^2(\Omega)} \left(\int_{\{x\}-rA} \left( j\left(\frac{x-y}{r}\right) \  r^{-d}\right)^2 \ dy\right)^{\frac{1}{2}} $$ 
$$= \|v^{\alpha}(t)\|_{L^2(\Omega)} \left(\int_{1\leq |z|\leq 2} j(z)^2 \ \  r^{-2d} \  r^d \ dz\right)^{\frac{1}{2}} $$
$$ = r^{-d/2} \ \|v^{\alpha}(t)\|_{L^2(\Omega)} \ \|j\|_{L^2(\mathbb{R}^d)} \leq C  r^{-d/2} \ \|v^{\alpha}(t)\|_{L^2(\Omega)}. $$
By means of the dominated convergence theorem, $(v^{\alpha}(t))_r \to (v^{\infty}(t))_r$ in $L^2$ as $\alpha \to 0$ for a.e. $t\in (0,T)$.

Secondly, it remains to show that $u^{\alpha}(t) \rightharpoonup v^{\infty}(t)$ in $L^2(K)$ for almost all $t\in (0,T)$.
Now, by elliptic regularity theory (\cite[Theorem 8.12]{Gilbarg.1983}) and the fact that $\|u^{\alpha}(t)\|_{L^2(\Omega)} \leq C  \|v^{\alpha}(t)\|_{L^2(\Omega)}$, we obtain $\|u^{\alpha}(t)\|_{H^2(\Omega)} \leq \frac{C}{\alpha^2}  \|v^{\alpha}(t)\|_{L^2(\Omega)}$ and  by interpolation $\|u^{\alpha}(t)\|_{H^1(\Omega)} \leq \frac{C}{\alpha}  \|v^{\alpha}(t)\|_{L^2(\Omega)}$. Thus,
$$ \|\alpha^2 \Delta u^{\alpha}(t)\|_{L^2(\Omega)} \leq  \alpha^2 \|u^{\alpha}(t)\|_{H^2(\Omega)} \leq C   \|v^{\alpha}\|_{L^{\infty}(0,T;L^2(\Omega))} \leq CE $$
and
$$ \|\alpha^2 \Delta u^{\alpha}(t)\|_{H^{-1}(\Omega)} \leq \alpha^2 \|u^{\alpha}(t)\|_{H^1_0(\Omega)} \leq C \alpha  \|v^{\alpha}\|_{L^{\infty}(0,T;L^2(\Omega))} \leq CE \alpha \ \to 0 \ \ \text{ as } \alpha \to 0.$$
Hence,
$$ \alpha^2 \Delta u^{\alpha}(t) \to 0 \ \ \ \text{ strongly in } H^{-1}(\Omega)$$
and consequently
$$ \alpha^2 \Delta u^{\alpha}(t) \rightharpoonup 0 \ \ \ \text{ weakly in } L^2(\Omega).$$

\subsection*{C}
$$ C= \left| \int_0^T \int_{\Omega} \left(v^{\infty} \otimes (v^{\infty})_r\right) : \nabla \Phi \ dx \ dt - \int_0^T \int_{\Omega} (v^{\infty} \otimes v^{\infty}) : \nabla \Phi \ dx \ dt \right|.$$
It is enough to show that
$$ \|(v^{\infty})_r-v^{\infty}\|_{L^2(0,T;L^2(K))} \to 0 \ \ \text{ as } r\to 0,$$
but this follows from the fact that translations are strongly continuous in $L^2$.\\

All together, we can find a sequence $(\alpha_n)_n$, depending on $r_n$, and $(r_n)_n$ small enough such that for every $\epsilon>0$ we can choose $\alpha_n, r_n$ small enough such that
$$ \left| \int_0^T \int_{\Omega} (u^{\alpha} \otimes v^{\alpha}) : \nabla \Phi \ dx \ dt - \int_0^T \int_{\Omega} (v^{\infty} \otimes v^{\infty}) : \nabla \Phi \ dx \ dt \right| < \epsilon.$$

\end{proof}

\section*{Appendix}
\addcontentsline{toc}{section}{Appendix}

\subsection*{A.1 Proof of Lemma \ref{lemma: relation norms}}
\label{subsection: relation between norms}
 \begin{proof}[Proof of Lemma \ref{lemma: relation norms}]\ \\
 (a)
$$\|u\|_{H^s}^2 =  \int_{\mathbb{R}^n} (1+|\xi|^2)^{s}\left(\frac{1}{1+\alpha^2|\xi|^2}\right)^2\ |\hat{v}^{}(\xi)|^2 \ d\xi \leq  \int_{\mathbb{R}^n} (1+|\xi|^2)^{s} \ |\hat{v}^{}(\xi)|^2 \ d\xi = \|v\|_{H^s}^2. $$
(b)
\begin{equation*}
\begin{aligned}
&\alpha^2\|u\|_{H^{s+1}}^2  = \int_{\mathbb{R}^n} \alpha^2(1+|\xi|^2)^{s+1}\left(\frac{1}{1+\alpha^2|\xi|^2}\right)^2\ |\hat{v}^{}(\xi)|^2 \ d\xi \\
&=    \int_{\mathbb{R}^n} \frac{\alpha^2+ \alpha^2|\xi|^2}{1+2\alpha^2|\xi|^2 + \alpha^4|\xi|^4}\ (1+|\xi|^2)^{s} |\hat{v}^{}(\xi)|^2 \ d\xi \leq \int_{\mathbb{R}^n}  (1+|\xi|^2)^{s} |\hat{v}^{}(\xi)|^2 \ d\xi = \|v\|_{H^s}^2.
\end{aligned}
\end{equation*}
(c)
\begin{equation*}
\begin{aligned}
&\alpha^4\|u\|_{H^{s+2}}^2 = \int_{\mathbb{R}^n} \alpha^4(1+|\xi|^2)^{s+2}\left(\frac{1}{1+\alpha^2|\xi|^2}\right)^2\ |\hat{v}^{}(\xi)|^2 \ d\xi \\
&=  \int_{\mathbb{R}^n} \frac{(\alpha^2+ \alpha^2|\xi|^2)^2}{(1+\alpha^2|\xi|^2)^2}\ (1+|\xi|^2)^{s} |\hat{v}^{}(\xi)|^2 \ d\xi \leq \int_{\mathbb{R}^n}  (1+|\xi|^2)^{s} |\hat{v}^{}(\xi)|^2 \ d\xi = \|v\|_{H^s}^2.
\end{aligned}
\end{equation*}
\end{proof}

\subsection*{A.2 A useful estimate}
\label{subsection: USeful estimates}
\begin{lemma}
    \label{lemma: Besov l}
Let $s>d/2+1$, $d\in\{2,3\}$, $\sigma\in [0,s-1]$, $T>0$.\\ Let $w \in \mathcal{C}([0,T];H^{\sigma}(\mathbb{R}^d)\cap H^1(\mathbb{R}^d))\cap W^{1,1}(0,T;L^2(\mathbb{R}^d))$, $\tilde{w}, h \in \mathcal{C}([0,T];H^{\sigma}(\mathbb{R}^d))$, $v_1,v_2,v_3 \in \mathcal{C}([0,T]H^s(\mathbb{R}^d))$, $p\in \mathcal{C}([0,T];H^1(\mathbb{R}))$ and $w, v_2$ divergence-free such that
\begin{equation}
\label{eq: A4}
    \partial_t w + (\tilde{w} \cdot \nabla) v_1 + (v_2 \cdot \nabla) w +  (h \cdot \nabla) v_3 = -\nabla p.
\end{equation}
Then,
$$ \|w^{}(t)\|_{H^{\sigma}_{}} \leq C \left( \|w^{}(0)\|_{H^{\sigma}_{}} + \int_0^T \| v_1\|_{H^s} \|\tilde{w}^{} \|_{H^{\sigma}} + \|v_2\|_{H^s}  \|w^{} \|_{H^{\sigma}}  \ + \|h\|_{H^{\sigma}} \|v_3\|_{H^s}\ dt\right).$$
\end{lemma}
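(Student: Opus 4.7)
The plan is to perform a standard $H^\sigma$ energy estimate. Taking the $H^\sigma$ inner product of equation \eqref{eq: A4} with $w$ gives
\begin{equation*}
\frac{1}{2}\frac{d}{dt}\|w\|_{H^\sigma}^2 = -\langle(\tilde w\cdot\nabla)v_1, w\rangle_{H^\sigma} - \langle(v_2\cdot\nabla)w, w\rangle_{H^\sigma} - \langle(h\cdot\nabla)v_3, w\rangle_{H^\sigma} - \langle\nabla p, w\rangle_{H^\sigma}.
\end{equation*}
The pressure term vanishes: because $(I-\Delta)^{\sigma/2}$ and $\nabla$ commute on $\mathbb{R}^d$, we can move $\Lambda^\sigma = (I-\Delta)^{\sigma/2}$ past the gradient and integrate by parts, using that $\operatorname{div}(\Lambda^\sigma w)=\Lambda^\sigma\operatorname{div}w=0$.

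Next I would treat the three nonlinear terms. For the transport term $\langle(v_2\cdot\nabla)w,w\rangle_{H^\sigma}$, I would exploit that $v_2$ is divergence-free. Splitting
\begin{equation*}
\Lambda^\sigma(v_2\cdot\nabla w) = v_2\cdot\nabla\Lambda^\sigma w + [\Lambda^\sigma,v_2\cdot\nabla]w,
\end{equation*}
the first piece integrated against $\Lambda^\sigma w$ vanishes by incompressibility, while the commutator is controlled by a Kato--Ponce-type bound (as in \cite{Kato.1988}, the same ingredient underlying estimates \eqref{eq: est 0} and \eqref{eq: est 4}) giving
\begin{equation*}
|\langle(v_2\cdot\nabla)w,w\rangle_{H^\sigma}|\leq C\|v_2\|_{H^s}\|w\|_{H^\sigma}^2,
\end{equation*}
where the assumption $s>d/2+1$ is used via the Sobolev embedding $H^{s-1}\hookrightarrow L^\infty$ to bound $\|\nabla v_2\|_{L^\infty}$. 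For the two ``forcing'' terms $\langle(\tilde w\cdot\nabla)v_1,w\rangle_{H^\sigma}$ and $\langle(h\cdot\nabla)v_3,w\rangle_{H^\sigma}$, I would apply Cauchy--Schwarz in $H^\sigma$ together with a product estimate (Moser/Kato--Ponce in the form underlying \eqref{eq: est 1}), using that the high-regularity factors $v_1,v_3\in H^s$ with $s>d/2+1$ compensate for the low-regularity factors $\tilde w,h\in H^\sigma$. This yields
\begin{equation*}
|\langle(\tilde w\cdot\nabla)v_1,w\rangle_{H^\sigma}|\leq C\|\tilde w\|_{H^\sigma}\|v_1\|_{H^s}\|w\|_{H^\sigma},
\end{equation*}
and analogously for the $h$ term.

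Combining these estimates, dividing by $\|w\|_{H^\sigma}$ (which is standard for this type of energy estimate, using that $\tfrac{d}{dt}\|w\|_{H^\sigma}\leq\tfrac{1}{2\|w\|_{H^\sigma}}\tfrac{d}{dt}\|w\|_{H^\sigma}^2$ at points where $\|w\|_{H^\sigma}\neq 0$, with a routine regularization argument to handle vanishing), yields
\begin{equation*}
\frac{d}{dt}\|w\|_{H^\sigma}\leq C\bigl(\|v_1\|_{H^s}\|\tilde w\|_{H^\sigma}+\|v_2\|_{H^s}\|w\|_{H^\sigma}+\|h\|_{H^\sigma}\|v_3\|_{H^s}\bigr).
\end{equation*}
Integrating in time gives the stated bound (with the constant $C$ absorbing the exponential $\exp(C\int_0^T\|v_2\|_{H^s}\,dt)$ from a Gronwall step if one prefers to close the $\|w\|_{H^\sigma}$ term on the right; alternatively one can simply leave $\|v_2\|_{H^s}\|w\|_{H^\sigma}$ inside the integral as in the statement).

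The main obstacle is the transport-term estimate when $\sigma\leq d/2$, since then $H^\sigma$ is not a Banach algebra and one cannot use the simpler estimate \eqref{eq: est 0} (which requires $\sigma>d/2$). The divergence-free condition on $v_2$ is essential here: it eliminates the top-order term $\langle v_2\cdot\nabla\Lambda^\sigma w,\Lambda^\sigma w\rangle_{L^2}$, so that only the commutator — which has one derivative redistributed and is estimable purely in terms of $\|v_2\|_{H^s}$ and $\|w\|_{H^\sigma}$ — remains.
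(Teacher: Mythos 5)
Your overall strategy --- an $H^\sigma$ energy estimate in which the divergence-free condition on $v_2$ kills the top-order part of the transport term and only a commutator remains --- is exactly the structure of the paper's proof, and you correctly identify that the whole difficulty sits in the regime $\sigma\leq d/2$ where $H^\sigma$ is not an algebra. However, the justification you give for the crucial commutator bound does not go through. The Kato--Ponce commutator estimate from \cite{Kato.1988} is symmetric: applied to $[\Lambda^\sigma,v_2^i]\partial_i w$ it yields
\begin{equation*}
\|[\Lambda^\sigma,v_2^i]\partial_i w\|_{L^2}\leq C\bigl(\|\nabla v_2\|_{L^\infty}\|\nabla w\|_{H^{\sigma-1}}+\|v_2\|_{H^\sigma}\|\nabla w\|_{L^\infty}\bigr),
\end{equation*}
and the second term $\|\nabla w\|_{L^\infty}$ is not controlled by $\|w\|_{H^\sigma}$ for $\sigma\leq d/2$ (nor by the assumed $w\in H^1$). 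The estimates \eqref{eq: est 0} and \eqref{eq: est 4} fail for the same reason: \eqref{eq: est 0} requires $\sigma>d/2$, and \eqref{eq: est 4} produces a term $\|w\|_{H^l}$ with $l>d/2+1$ on the right. What you actually need is a genuinely \emph{asymmetric} commutator estimate of the form $\|[\Lambda^\sigma,v_2\cdot\nabla]w\|_{L^2}\leq C\|v_2\|_{H^s}\|w\|_{H^\sigma}$, valid for low $\sigma$ because $v_2$ carries all the excess regularity. This estimate is true, but it requires a separate argument (a paraproduct decomposition, or the Littlewood--Paley commutator estimate of Bahouri--Chemin--Danchin, Lemma~2.100, which is what the paper uses after localising the equation with dyadic blocks $\Delta_j$ and taking weighted $\ell^2$ norms). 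The same issue affects your product estimates for the forcing terms: \eqref{eq: est 1} requires $\sigma>d/2+1$, whereas the needed bound $\|(\tilde w\cdot\nabla)v_1\|_{H^\sigma}\leq C\|\tilde w\|_{H^\sigma}\|v_1\|_{H^s}$ for $\sigma\in[0,s-1]$ is obtained in the paper by interpolating the bilinear map between $L^2\times H^{s-1}\to L^2$ and $H^{s-1}\times H^{s-1}\to H^{s-1}$.

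In short: the skeleton of your argument is right and coincides with the paper's, but the specific references you invoke to close the two key estimates are precisely the ones that break down in the low-regularity range $\sigma\leq d/2$ for which the lemma is needed. To repair the proof, replace the classical Kato--Ponce step by an asymmetric commutator estimate (proved via paraproducts or Littlewood--Paley blocks) and justify the product estimates by interpolation rather than by \eqref{eq: est 1}.
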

\begin{proof}
The proof is restricted to the case $\sigma\leq d/2 $. For the case $d/2<\sigma \leq s-1$, we can conclude with \eqref{eq: est 0} and the Banach algebra property of $H^{\sigma}$.

We will use some results from \cite[Chapter 2]{Bahouri.2011}. Let $\Delta_j$ denote the nonhomogeneous dyadic blocks from Littlewood-Paley theory. Application of $\Delta_j$ to \eqref{eq: A4} leads to
$$ \partial_t w^{}_j + \Delta_j (\tilde{w}^{} \cdot \nabla) v_1 +  (v_2 \cdot \nabla) w^{}_j + \Delta_j(h\cdot \nabla) v_3 =  R_j - \nabla p_j,$$
where $w_j:=\Delta_jw$ and $R_j:=[v_2\cdot \nabla,\Delta_j] w^{}$ is a commutator. Multiplying by $w^{}_j$ and integrating over space gives
\begin{align*}
    \int_{\mathbb{R}^d} \partial_t w^{}_j \cdot w^{}_j \ dx \ &+ \int_{\mathbb{R}^d} \left[\Delta_j (\tilde{w} \cdot \nabla) v_1 \ + \ (v_2\cdot \nabla) w^{}_j \ + \ \Delta_j(h\cdot \nabla) v_3 \right] \cdot w^{}_j \ dx \\
    &=  \int_{\mathbb{R}^d} \left[R_j -\nabla p_j\right] \cdot w^{}_j \ dx,
\end{align*} 
hence
$$ \frac{1}{2} \frac{d}{dt} \|w^{}_j\|_{L^2}^2 + \int_{\mathbb{R}^d} \Delta_j (\tilde{w} \cdot \nabla) v_1 \cdot w^{}_j\ dx  + \int_{\mathbb{R}^d}\Delta_j(h\cdot \nabla) v_3 \cdot w^{}_j \ dx \ =  \int_{\mathbb{R}^d} R_j \cdot w^{}_j \ dx, $$
and then
$$ \frac{1}{2} \frac{d}{dt} \|w^{}_j\|_{L^2}^2 \leq  \|\Delta_j (\tilde{w} \cdot \nabla) v_1\|_{L^2} \|w^{}_j\|_{L^2}+ \|\Delta_j(h \cdot \nabla) v_3 \|_{L^2} \|w^{}_j\|_{L^2} + \|R_j\|_{L^2} \|w^{}_j\|_{L^2}. $$
Integrating over time,
$$  \|w^{}_j(T)\|_{L^2} \leq  \int_0^T \|\Delta_j (\tilde{w} \cdot \nabla) v_1\|_{L^2} + \|\Delta_j(h\cdot \nabla) v_3 \|_{L^2}  + \|R_j\|_{L^2} \ dt + \|w^{}_j(0)\|_{L^2},$$
and multiplying by $2^{j\sigma}$ leads to
\begin{align*}
    2^{j\sigma}\|w^{}_j(T)\|_{L^2} \leq & \int_0^T 2^{j\sigma}\left\|\Delta_j (\tilde{w}\cdot \nabla) v_1\right\|_{L^2} + 2^{j\sigma}\|\Delta_j(h\cdot \nabla) v_3\|_{L^2}   + 2^{j\sigma}\|R_j\|_{L^2} \ dt\\
    &+ 2^{j\sigma}\|w^{}_j(0)\|_{L^2}.
\end{align*} 
We take the $\textit{l}^r$-norm with respect to the index $j$ for some $1\leq r  \leq \infty$ and use the triangle inequality to obtain
$$  \left\|2^{j\sigma}\|w^{}_j(T)\|_{L^2}\right\|_{\textit{l}^r} \leq  \left\|\int_0^T 2^{j\sigma}\|\Delta_j (\tilde{w} \cdot \nabla) v_1\|_{L^2}  \ dt\right\|_{\textit{l}^r} \ + \ \left\|\int_0^T 2^{j\sigma}\|\Delta_j(h\cdot \nabla) v_3\|_{L^2}  \ dt\right\|_{\textit{l}^r} $$ 
$$+ \left\|\int_0^T 2^{j\sigma}\|R_j\|_{L^2} \ dt \right\|_{\textit{l}^r}+ \left\|2^{j\sigma}\|w^{}_j(0)\|_{L^2}\right\|_{\textit{l}^r}.$$
Now, by the triangle inequality for the norm $\left\|\cdot\right\|_{\textit{l}^r}$ and the commutator estimate \cite[Lemma 2.100]{Bahouri.2011}, for $0\leq \sigma < 1 +\frac{d}{2}$,
$$\left\|\int_0^T 2^{j\sigma}\|R_j\|_{L^2} \ dt \right\|_{\textit{l}^r} \leq\int_0^T  \left\|2^{j\sigma}\|R_j\|_{L^2} \right\|_{\textit{l}^r}\ dt $$ $$\leq \int_0^T C \left\|\nabla v_2\right\|_{{B^{\frac{d}{2}}_{2,\infty}}\cap L^{\infty}} \|w^{}\|_{B^{\sigma}_{2,r}}   \ dt .$$
Furthermore, for $\sigma \in [0,s-1]$,
$$\left\|\int_0^T 2^{j\sigma}\|\Delta_j (\tilde{w} \cdot \nabla) v_1\|_{L^2}  \ dt\right\|_{\textit{l}^r}\leq \int_0^T \|(\tilde{w} \cdot \nabla) v_1\|_{B^{\sigma}_{2,r}}\ dt \leq \int_0^T \|\tilde{w}\|_{B^{\sigma}_{2,r}} \|v_1\|_{H^s}\ dt, $$
$$ \left\|\int_0^T 2^{j\sigma}\|\Delta_j(h\cdot \nabla) v_3 \|_{L^2}  \ dt\right\|_{\textit{l}^r}  \leq \int_0^T \|(h\cdot \nabla) v_3 \|_{B^{\sigma}_{2,r}}\ dt  \leq \int_0^T \|h \|_{B^{\sigma}_{2,r}} \|v_3\|_{H^s}\ dt. $$
This follows from the fact that $B^{\sigma}_{2,r}$ is a real interpolation space between $L^2$ and $H^{s-1}$ for $1\leq r \leq \infty$, $0< \theta <1$, $ \sigma = \theta (s-1)$ (\cite[Thm. 6.4.5]{Bergh.1976}).

In total, for $r=2$,
\begin{align*} \|w^{}(T)\|_{B^{\sigma}_{2,2}} \leq & \|w^{}(0)\|_{B^{\sigma}_{2,2}}\\
+ &\int_0^T C \left(\| v_2\|_{{H^s}} \|w^{} \|_{B^{\sigma}_{2,2}} + \|v_1\|_{H^s} \|\tilde{w}\|_{B^{\sigma}_{2,2}}    \ + \|h\|_{B^{\sigma}_{2,2}} \|v_3\|_{H^s}\right)\ dt.
\end{align*}
\end{proof}

\textbf{Acknowledgement:} The authors gratefully acknowledge the support by the Research Training Group 2339 IntComSin of Deutsche Forschungsgemeinschaft (DFG, German Research Foundation) -- Project-ID 321821685, and of the DFG Priority Programme 2410 CoScaRa -- Project no.~525716336.

\printbibliography

\end{document}